\theoremstyle{definition}
\newtheorem*{definition*}{Definition}
\newtheorem{theorem}{Theorem}
\newtheorem{lemma}[theorem]{Lemma}
\newtheorem{proposition}[theorem]{Proposition}
\newtheorem*{remark*}{Remark}
\newtheorem{remark}{Remark}
\newtheorem*{theorem*}{Theorem}
\newenvironment{proofof}[1]{\noindent {\bf{Proof of #1.}}}{ \hfill\qed\\ }
\def\e{\varepsilon}
\begin{document}
\title[Minimal interval exchange transformations with flips]{On the Hausdorff dimension of minimal interval exchange transformations with flips}
\author {Alexandra Skripchenko}
\address{Faculty of Mathematics, National Research University Higher School of Economics, Vavilova St. 7, 112312 Moscow, Russia}
\email{sashaskrip@gmail.com}

\def\curraddrname{{\itshape Mailing address}}
\author{Serge Troubetzkoy}
\address{Aix Marseille Universit\'e, CNRS, Centrale Marseille, I2M, UMR
  7373, 13453 Marseille, France}
\curraddr{ I2M, Luminy\\ Case 907\\ F-13288 Marseille CEDEX 9\\ France}
 \email{serge.troubetzkoy@univ-amu.fr}

\begin{abstract}
We prove linear upper and lower bounds for the Hausdorff dimension set of  minimal interval exchange transformations  with flips (in particular without periodic points), and a linear lower bound for the Hausdorff dimension of the set of non-uniquely ergodic minimal interval exchange transformations with flips.
 \end{abstract}
 \maketitle

\section{Introduction}
\subsection{The results}
We study interval exchange transformations  with flips (fIET). 
An fIET is an piecewise isometry of an interval to itself with a finite number of jump discontinuities, reversing  the orientation of at least one of the  intervals 
of continuity.
Interval exchange transformations with flips generalize the notion of orientation-preserving interval exchange transformations (IET). Each IET is the first return map to a transversal for a measured foliation on surface. In the same way, each fIET is the first return map to a transversal for a vector field on a non-orientable surface. More precisely one can associate each fIET with an oriented, but not co-orientable measured foliation on a non-orientable surface (see \cite{DySkr} for details).
fIETs also arise from oriented measured foliations on nonorientable surfaces \cite{DaNo}.
An additional motivation to study fIETs comes from so called billiards with flips (see \cite{No} for details).


The set of fIETs and of IETs on $n$ intervals are naturally parametrized by a subset of $\mathbb{R}^{n-1}$ and some discrete parameters. The ergodic properties of IETs are well known: almost all irreducible IETs are minimal (Keane \cite{Ke}) and almost all IETs are uniquely ergodic (Masur \cite{Ma}, Veech \cite{Ve2}).
Nogueira has shown that that fIETs  have completely different dynamics

\begin{theorem*}[\cite{No}]
Lebesgue almost every interval exchange transformation with flips has a periodic point.
\end{theorem*}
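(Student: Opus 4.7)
The plan is to adapt Rauzy--Veech induction to the setting of fIETs and to show that, for Lebesgue-a.e.\ parameter, the induction reaches a combinatorial configuration which forces a periodic point. I would first define an analogue of Rauzy--Veech induction for fIETs: compare the lengths of the rightmost intervals on the top and bottom of the partition, take the first return of $T$ to the shorter of the two, and update the combinatorial data (a permutation together with a flip pattern). After rescaling, this induction defines a piecewise projective map on the parameter simplex that is non-singular with respect to Lebesgue measure, just as in the orientation-preserving case.

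Next I would isolate a family of \emph{trap} combinatorial types that force a periodic point. The cleanest example is a Rokhlin tower over a flipped base whose return has odd overall flip parity: the return map to the base is then an orientation-reversing isometry of the form $x \mapsto -x + c$ on some subinterval $J$, and its unique fixed point $c/2$ lies in $J$ as soon as $J$ is centered appropriately. One can arrange, after at most a bounded number of further induction steps, that such a centering condition is satisfied on a definite open set of parameters associated with every combinatorial type containing a flip.

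The main step, and the expected main obstacle, is to prove that Lebesgue-a.e.\ orbit of the renormalization enters such a trap. I would treat the renormalization as a Markov process on the finite set of flipped combinatorial types and argue as follows: using the non-singularity of the induction together with a Borel--Cantelli argument, it suffices to show that from every combinatorial type in any irreducible Rauzy class there is a positive-measure set of parameters leading to a trap within finitely many induction steps. This combinatorial accessibility statement is the hardest part; I would approach it by induction on the number of intervals, showing that each Rauzy class either contains an immediate trap or admits a Rauzy move leading, on a subset of positive measure, to a strictly simpler fIET to which the inductive hypothesis applies. Pulling the resulting full-measure trap set back through the non-singular induction then yields the theorem.
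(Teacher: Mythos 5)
Your general outline---set up a Rauzy--Veech induction for fIETs, identify trap configurations where an orientation-reversing first return has a fixed point, and show almost every renormalization orbit reaches a trap---is aimed in the right direction and is broadly the framework Nogueira uses in \cite{No}. But as written there are several concrete gaps, and the two load-bearing steps are asserted rather than established.

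First, the proposed induction on the number of intervals does not get off the ground. Rauzy induction for fIETs preserves $n$: a Rauzy move replaces an $n$-fIET by another $n$-fIET on a shorter support interval. A reduction to fewer intervals only occurs on the measure-zero locus where lengths degenerate, so ``admits a Rauzy move leading, on a subset of positive measure, to a strictly simpler fIET'' cannot be made to work, and the inductive scheme collapses.

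Second, and more seriously, ``non-singularity of the induction together with a Borel--Cantelli argument'' is far too weak to conclude. The Rauzy cocycle has unbounded Jacobian, and the relative Lebesgue measure of a fixed open ``trap'' region inside a cylinder of the Markov partition can a priori decay along the orbit as the cylinders become increasingly distorted. To prove that a.e.\ orbit is eventually trapped, one needs a \emph{uniform} lower bound on the conditional probability of hitting a trap, uniform over all combinatorial types in the Rauzy class and over all combinatorial histories. This uniform distortion control is precisely the technical core; it is what a Kerckhoff-type estimate (in the present paper, Lemma~\ref{Ker} and Theorem~\ref{A2}) provides, and Nogueira's proof in \cite{No} supplies a comparable quantitative estimate. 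Without it the sum of probabilities can converge, and Borel--Cantelli would then give the wrong conclusion (a null set of parameters leading to a trap infinitely often, rather than full measure).

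Third, the statement that after a bounded number of further induction steps the ``centering condition is satisfied on a definite open set of parameters associated with every combinatorial type containing a flip'' is not a soft combinatorial accessibility fact; it is the quantitative claim on which the whole measure estimate rests, and it must be proved, with bounds independent of the combinatorial history. As it stands, your proposal names where the difficulty lies (you call it ``the hardest part'') but does not resolve it.
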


The measure in question is the Lebesgue measure on $\mathbb{R}^{n-1}$. In this article, we evaluate the Hausdorff dimension of the set $MF_n$ of minimal fIETs on $n$-interval, this set  is  subset of  fIETs on $n$-intervals without periodic points.  
We prove the following 
\begin{theorem}\label{minimal}
The Hausdorff dimension of the set $MF_n$ satisfies:
$$n-2\le Hdim (MF_n)<n-1.$$
\end{theorem}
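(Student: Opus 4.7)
The plan is to treat the two bounds separately with quite different techniques.

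\textbf{Upper bound $Hdim(MF_n) < n-1$.} Nogueira's theorem only gives Lebesgue measure zero, which is not enough, since null sets can have full Hausdorff dimension. I would strengthen his argument quantitatively via a Rauzy--Veech type renormalization for fIETs. Iterating the renormalization induces a piecewise projective action on the $(n-1)$-dimensional parameter simplex with bounded distortion on each branch. The crucial input I would need to establish is a uniform constant $\delta > 0$ such that at each renormalization step, a relative Lebesgue proportion at least $\delta$ of the current simplex consists of parameters whose fIET has a periodic orbit visible to the induction (and so lies outside $MF_n$). Given this, the set of parameters surviving $N$ renormalization steps without producing a periodic orbit has relative Lebesgue measure at most $(1-\delta)^N$ inside each cylinder it meets, and is covered by such cylinders whose diameters contract geometrically, say as $\lambda^N$, by bounded distortion. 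A standard covering argument converting exponential measure decay plus geometric diameter decay into a dimension bound then yields $Hdim(MF_n) \le n-1-c$ for an explicit $c=c(\delta,\lambda) > 0$, strictly less than $n-1$.

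The main obstacle in this part is producing the uniform proportion $\delta$. This is a concrete but delicate combinatorial statement about the Rauzy graph for fIETs, not just a generic estimate: at each vertex one needs a specific induction branch whose parameters force a periodic orbit and which covers a definite proportion of the simplex. One reasonable route is to extract such uniformity from Nogueira's proof by examining his estimates vertex-by-vertex on the Rauzy graph; an alternative is to exhibit explicitly, for every combinatorial type, a short itinerary of induction moves that necessarily creates a periodic orbit.

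\textbf{Lower bound $Hdim(MF_n) \ge n-2$.} Here I would exhibit an $(n-2)$-dimensional family of minimal fIETs whose set of parameters has Hausdorff dimension $n-2$. The natural candidate is to embed the space of minimal IETs on $n-1$ intervals, which has full Hausdorff dimension $n-2$ by Masur--Veech, into the parameter space of fIETs on $n$ intervals. Concretely, starting from a minimal IET $T$ on $n-1$ intervals, I would split one of its intervals into two pieces at a position determined by the existing parameters (so that no new continuous parameter is introduced) and flip one of the new pieces in a combinatorially prescribed way, obtaining a fIET on $n$ intervals. The remaining task is to show that for Lebesgue-almost every choice of the $n-2$ parameters of $T$, the modified map is still minimal; this reduces to verifying that the orbit of the new discontinuity under the modified map is dense, which can be done by tracking it under the dynamics of $T$ and invoking Keane's criterion. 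Because the embedding is bi-Lipschitz on its parameter domain and a full $(n-2)$-dimensional Lebesgue-measure subset of parameters yields minimal fIETs, the image has Hausdorff dimension $n-2$, giving the lower bound.
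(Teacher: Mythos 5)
Your two bounds diverge from the paper in different ways, so let me treat them separately.

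\textbf{Upper bound.} Your outline is plausible but you correctly flag the missing ingredient: a uniform lower bound $\delta$ on the proportion of parameters in each cylinder that fall out of $MF_n$ at the next induction step. The paper does \emph{not} supply such a bound and does not argue via ``a definite proportion gains a periodic orbit at each step.'' Instead it works in the Avila--Gou\"ezel--Yoccoz framework: it shows the projectivized Rauzy induction is a uniformly expanding Markov map, proves a Kerckhoff-type distortion estimate $P_q(\{\gamma \in \Gamma_\alpha(\hat\pi) : (B_\gamma q)_\alpha > T q_\alpha\}\mid\hat\pi) < n/T$, upgrades this (Theorems~\ref{A2} and \ref{thm:proba-estimate}) to show that the roof function for the first return to a precompact subsimplex has exponential tails, deduces that the induced Markov map is \emph{fast decaying} in the sense of Avila--Delecroix, and then quotes their result to get $Hdim < n-1$. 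So the logical shape (exponential control $\Rightarrow$ strict dimension drop) is similar, but the quantity being controlled is the return time / Jacobian of the induction, not the frequency of periodic orbits; this sidesteps the combinatorial task you identified as the ``main obstacle,'' which is indeed hard and which the paper never solves in the form you propose. As written, your upper bound argument is a research plan with the central lemma unproved, not a proof.

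\textbf{Lower bound.} Here there is a genuine gap. You propose to split one interval of a minimal $(n-1)$-IET $S$ into two pieces and flip one of them, then argue that the resulting $n$-fIET is minimal for a.e.\ parameter ``by tracking the orbit of the new discontinuity and invoking Keane's criterion.'' This does not work as stated: flipping an interval changes the dynamics \emph{inside} that interval, not just at one new discontinuity, and Keane's criterion is a statement about orientation-preserving IETs that does not transfer to a map with a reflection. In particular the orbit of the flipped piece under your modified map is no longer an $S$-orbit, so minimality of $S$ gives you no control; compositions of reflections with the remaining translations can easily produce a periodic orbit (which is precisely Nogueira's point that fIETs generically do have periodic points). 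The paper avoids this entirely by choosing a construction in which the original IET reappears literally: it extends the domain to $[0, 1+\beta_{i_0}-\beta_{i_0-1})$ and defines $T$ so that the \emph{first return map} of $T$ to $[0,1)$ is exactly $S$. Then density of every bi-infinite $T$-orbit follows directly from minimality (indeed forward and backward minimality) of $S$, with no new orbit analysis required, and the parametrization by the $(n-2)$-dimensional simplex of $(n-1)$-IETs is bi-Lipschitz, giving $Hdim(MF_n)\ge n-2$. You would need to replace your split-and-flip construction with a suspension-type construction of this kind, or else actually prove that your modification preserves minimality, which is a nontrivial claim you have not justified.
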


We also prove a lower bound for the Hausdorff dimension of non-uniquely ergodic minimal fIETs. Let us denote this set by $NU\hspace{-0.03cm}E_n\subset MF_n$.

\begin{theorem}\label{NUE}
The  Hausdorff dimension of  the set of non-uniquely ergodic minimal $n$-fIETs satisfies:
$$\left[\frac{n-1}{2}\right]-1 \le Hdim({NU\hspace{-0.03cm}E}_n).$$
\end{theorem}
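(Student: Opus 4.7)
\begin{proofof}{Theorem \ref{NUE}}

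Set $k = \lfloor (n-1)/2 \rfloor$; the plan is to exhibit a subset of $NU\hspace{-0.03cm}E_n$ of Hausdorff dimension at least $k-1$ by embedding a family of classical non-uniquely ergodic IETs into the fIET parameter space via a bi-Lipschitz map and transferring non-unique ergodicity across the embedding.

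The embedding $\Psi$ that I have in mind is a \emph{doubling-with-flips}: given an irreducible classical IET $T$ on $k+1$ subintervals of $[0,1]$, take two isometric copies of the domain of $T$, glue them along endpoints, and let each crossing between copies produce a flip. Geometrically, $\Psi(T)$ should be realizable as the first-return map of an oriented foliation on the non-orientable surface obtained by quotienting the translation suspension of $T$ by a free orientation-reversing involution, so that $\Psi(T)$ is a $\Z/2$-extension of $T$. The flip combinatorics are to be chosen so that $\Psi(T)$ is a genuine $n$-interval fIET with no accidental fusions of adjacent intervals. Then minimality of $T$ forces minimality of $\Psi(T)$, since any $\Psi(T)$-periodic orbit would project to a $T$-periodic orbit; and any pair of distinct $T$-invariant ergodic probability measures lifts to a pair of distinct $\Psi(T)$-invariant ergodic measures, giving non-unique ergodicity. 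The length assignment is linear and injective, so $\Psi$ is bi-Lipschitz on parameter space.

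For the dimension input I would invoke the classical theory of non-unique ergodicity: in a suitable Rauzy class of $(k+1)$-interval IETs, minimal non-uniquely ergodic IETs form a set of Hausdorff dimension at least $k - 1$ inside the $k$-dimensional length simplex. For small $k$ this is the Keynes--Newton--Masur--Smillie construction applied to a 4-IET base; for larger $k$, one inherits the family by Rauzy--Veech induction, which adds linear parameters while preserving non-unique ergodicity. Pulling the resulting subset back through the bi-Lipschitz $\Psi$ produces a subset of $NU\hspace{-0.03cm}E_n$ of Hausdorff dimension at least $k - 1 = \lfloor (n-1)/2 \rfloor - 1$.

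The principal obstacle is the construction of $\Psi$: one must exhibit explicit admissible flip combinatorics producing exactly $n$ intervals in an irreducible fIET class, with no new periodic orbits, and verify the $\Z/2$-extension structure needed for the ergodic-measure transfer. The Rauzy--Veech machinery for fIETs developed in \cite{DySkr} should be the right tool. A secondary point is ensuring that the classical-IET dimension input actually reaches $k-1$ in the relevant Rauzy class; this may require a careful choice of class, perhaps hyperelliptic, together with an inductive lifting inside the classical non-unique-ergodicity theory.

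\end{proofof}
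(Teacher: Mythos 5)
Your approach differs from the paper's in two substantive ways, and both introduce gaps.

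First, the embedding. The paper's construction (Section~\ref{constr}) is much simpler than your doubling: it takes an $(n-1)$-IET $S$, flips a single interval and extends the domain slightly, so that the first return map to $[0,1)$ is $S$. That adds exactly one interval, and minimality and non-unique ergodicity of $S$ transfer directly because the new map is a bounded tower over $S$. Your $\Z/2$-extension, by contrast, is only sketched, and the justification that minimality transfers is wrong as stated: absence of periodic orbits in $\Psi(T)$ does not imply minimality of $\Psi(T)$. A $\Z/2$-extension of a minimal map can fail to be topologically transitive (this happens exactly when the defining cocycle is a coboundary), with no periodic orbits present. You would need a separate argument that your flip cocycle is not a coboundary. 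There is also a counting mismatch: doubling a $(k+1)$-IET with $k=\lfloor(n-1)/2\rfloor$ gives roughly $2k+2$ intervals, which equals $n$ only when $n$ is even.

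Second, and more seriously, the dimension input. You invoke as ``classical'' that minimal non-uniquely ergodic $(k+1)$-IETs form a set of Hausdorff dimension at least $k-1$ in the $k$-dimensional length simplex. This is not available in that generality. The Masur--Smillie theory gives an \emph{upper} bound (codimension at least $1/2$), and Athreya--Chaika establish the matching lower bound only in a specific stratum of $4$-IETs. What is available in general is the Katok-type argument, which the paper actually carries out: fix one minimal non-uniquely ergodic IET $S$ with $g$ ergodic measures ($g$ the genus), use Katok's lemma to produce, for each probability vector $\vec{\e}$, an IET $S_{\vec{\e}}$ with $(S_{\vec{\e}},\mathrm{Leb})\cong(S,\sum\e_i\mu_i)$, and check that $\vec{\e}\mapsto S_{\vec{\e}}$ is linear and (almost surely) injective. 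This gives a $(g-1)$-dimensional family of non-uniquely ergodic IETs, with $g$ bounded by Sataev's theorem. For $(k+1)$-IETs the genus is at most about $k/2$, so this route yields dimension roughly $k/2-1$, not $k-1$. Your stated bound is roughly twice what the standard argument gives, and you would need to supply a proof. The paper avoids this by starting from an $(n-1)$-IET (not a $(k+1)$-IET), so that the genus bound $g\le\lfloor(n-1)/2\rfloor$ together with Katok already yields $\lfloor(n-1)/2\rfloor-1$ after adding one flipped interval.
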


In the case $n=5$, using a result of  Athreya and Chaika \cite{AtCh} we have much better lower bound: 

\begin{proposition}\label{Jayadev}
For $n=5$
$$\frac{5}{2} \le Hdim(NU\hspace{-0.03cm}E_5).$$
\end{proposition}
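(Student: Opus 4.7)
The plan is to import the Athreya--Chaika lower bound on the Hausdorff dimension of non-uniquely ergodic $4$-IETs into the construction used for Theorem \ref{NUE}, and optimize. Athreya and Chaika \cite{AtCh} show that the set of non-uniquely ergodic $4$-IETs has Hausdorff dimension at least $1/2$ on a generic transversal of the three-dimensional length parameter space; allowing the transversal (or equivalently the underlying translation surface in the ambient stratum of genus-$2$ abelian differentials) to vary adds two further dimensions, yielding a set of dimension at least $5/2$ of non-uniquely ergodic $4$-IETs inside the full parameter space.

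The next step is to lift this $5/2$-dimensional set into $NU\hspace{-0.03cm}E_5$ via a bi-Lipschitz embedding $\Phi$. To any non-uniquely ergodic $4$-IET $T$ I associate a $5$-fIET $\tilde T$ by choosing one interval of continuity $I_j$ of $T$, splitting it into two subintervals at some point $\alpha$, and postcomposing the restriction of $T$ to one of the two pieces with an orientation reversal. The resulting map $\tilde T=\Phi(T,\alpha)$ is a $5$-fIET with exactly one flipped interval, and since $\alpha$ can be chosen as a fixed smooth function of the length data of $T$ (say a rational fraction of $|I_j|$), the map $\Phi$ is a bi-Lipschitz embedding whose image inherits the full $5/2$-dimensional Hausdorff content from its domain. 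The natural folding projection $\tilde T\to T$ semiconjugates the two dynamics, so the two mutually singular invariant probability measures witnessing the non-unique ergodicity of $T$ lift to two mutually singular invariant probability measures of $\tilde T$.

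The final ingredient is minimality of $\tilde T$: since $T$ is minimal, any periodic orbit of $\tilde T$ must traverse the subdivision point of $I_j$, and a standard Keane-type argument rules this out on a full Hausdorff-measure subset of parameters by excluding a countable family of codimension-one resonance conditions. Combining these ingredients yields $Hdim(NU\hspace{-0.03cm}E_5)\ge 5/2$. The main obstacle is the dimensional bookkeeping in the first two steps: to extract the bound $5/2$ from Athreya and Chaika's base value of $1/2$, one must verify that the two extra dimensions (transversal in the stratum) genuinely contribute additively to the Hausdorff dimension, and that the surgery $\Phi$ is transverse to the Athreya--Chaika set in the sense that no parameter collapse occurs, so that the bi-Lipschitz constant does not degenerate along the set on which one wants to apply the dimension comparison.
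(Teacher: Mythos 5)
Your high-level strategy is the same as the paper's (lift the Athreya--Chaika set of non-uniquely ergodic $4$-IETs to $NU\hspace{-0.03cm}E_5$ by a surgery that adds one interval), but the surgery you propose does not work, and the gap is not the ``dimensional bookkeeping'' you flag at the end but the dynamics of $\Phi$.

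Your $\Phi$ subdivides one continuity interval $I_j$ of $T$ at a point $\alpha$ and replaces the translation on one of the two pieces by a reflection onto the same image interval. This is indeed a valid $5$-fIET $\tilde T$, but $\tilde T$ bears no dynamical relation to $T$: there is no ``folding projection'' $\tilde T \to T$. The domain of $\tilde T$ is the same interval $[0,1)$ as for $T$, not a double cover, and on the flipped piece the two maps send a typical point to genuinely different places; once an orbit enters the flipped piece it diverges from the $T$-orbit and never resynchronizes. In particular a $T$-invariant measure $\mu$ is \emph{not} $\tilde T$-invariant unless $\mu$ restricted to the flipped piece happens to be symmetric under the reflection, which fails for a generic IET; so the claim that the two mutually singular ergodic measures of $T$ lift to $\tilde T$ is unjustified, and the Keane-type argument you invoke for minimality of $\tilde T$ has nothing to lean on either, since $\tilde T$ does not inherit any recurrence structure from $T$.

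The construction the paper actually uses (Section~\ref{constr}) is different in exactly the way needed: starting from an $(n-1)$-IET $S$ on $[0,1)$, it \emph{enlarges} the domain to $[0,1+\beta_{i_0}-\beta_{i_0-1})$, sends the interval $I_{\alpha_0}$ (the one with $\pi_1(\alpha_0)=1$) by a flip to the newly added interval beyond $1$, and then sends that new interval back into $[0,\beta_{i_0}-\beta_{i_0-1})$ by another flip. The two flips compose to the original translation, so the \emph{first return map of $T$ to $[0,1)$ is $S$}. That one property is what makes everything transfer: invariant measures of $S$ correspond bijectively to invariant measures of $T$ (so non-unique ergodicity is preserved), minimality of $S$ yields density of every infinite $T$-orbit, and the map $\lambda\mapsto(\lambda,\lambda_{\alpha_0})$ followed by renormalization is a smooth, locally bi-Lipschitz embedding of the $4$-IET simplex into the $5$-fIET simplex, so the Hausdorff dimension $5/2$ carries over. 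To repair your argument you should replace your interval-splitting surgery by this ``go out and come back'' surgery (or any other surgery with the first-return property); the rest of your outline, including the reading of \cite{AtCh} as giving dimension $5/2$ in the full $4$-IET parameter space, then matches the paper.
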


The paper is organized as follows. 
 In Section \ref{Def} we give main definitions and briefly discuss known results and compare to the case of  interval exchange transformations
 without flips. 
In Section \ref{Rauzy} we describe our main tool -  Rauzy induction for interval exchange transformations with flips, and study its combinatorics. We also introduce the notion of the cocycle associated with the Rauzy induction.
 Section \ref{Markov} is dedicated to the Markov map associated with the Rauzy induction: we prove key features of this map and of the corresponding Markov partition, in particular we show that this map is uniform expanding in a sense of \cite{AvGoYo}.
 In Section \ref{Distortion} we prove some distortion estimations for the cocycle based on the so called Kerckhoff lemma (see \cite{Ker}). We mainly follow the approach suggested in \cite{AvGoYo} for interval exchange transformations and applied in \cite{AvRe} for linear involutions and in \cite{AvHuSkr} for systems of isometries.
 Section \ref{Roof} is about the roof function:  using the estimations from the previous section show that the roof function has exponential tails. The proof is also inspired by the similar result in \cite{AvGoYo}. 
In Section \ref{upper} we prove the upper bound for the Hausdorff dimension announced in Theorem \ref{minimal}. The proof is based on the argument presented in \cite{AvDe}: first, we show that the exponential tail of the roof function implies that the corresponding Markov map is fast decaying in a sense of \cite{AvDe} and then using \cite{AvDe} check that this property implies the estimation we are interested in.  
 Section \ref{lower} completes the proof of Theorem~\ref{minimal}: we show the lower bound applying a construction related
 to the one described in Nogueira in \cite{No}.  Using the same idea, we prove  Theorem \ref{NUE} and Proposition \ref{Jayadev}.
\subsection{Acknowledgements} 
We thank Pascal Hubert for very useful discussion and the anonymous referee for a careful reading of the manuscript and in particular for improving our lower bound.

AS was supported by RSF grant, project 14-21-00053 dated 11.08.14.

ST graciously acknowledge the support of R\'egion Provence-Alpes-C\^otes d'Azur; project APEX ``Syst\`emes dynamiques : Probabilit\'es et Approximation Diophantienne PAD''.
\section{Definitions and known results}\label{Def}
\subsection{Interval exchange transformations}
Let $I\subset \mathbb R$ be an interval (say, $I=[0,1)$) and $\{I_\alpha: \alpha \in \EuScript{A}\}$ be a partition of $I$ into subintervals, of the form $[a,b)$, indexed by some alphabet $\EuScript{A}$ on $n\ge 2$ symbols.  
An \emph{interval exchange transformation} (IET) is a bijective map from $I$ to $I$ which is a translation on each subinterval $I_{\alpha}$.

Such a map $f$ is determined by the following collection of combinatorial and metric data: 
\begin{itemize}
\item a vector $\lambda=(\lambda_\alpha)_{\alpha \in \EuScript{A}}$ of lengths of the subintervals. Each component $\lambda_\alpha$ is positive. 
\item a  permutation $\pi=(\pi_0,\pi_1)$ which is a pair of bijections $\pi_i: \EuScript{A}\rightarrow \{1,\cdots, n\}$ describing the ordering of the subintervals $I_\alpha$ before and after the map is iterated;
\end{itemize}

Each IET  preserves the measure and the orientation on $I$. 

\begin{definition*}\nonumber Let $f: X \to X$ be an invertible map. The \emph{orbit} of $x \in X$ is the subset 
$\{f^k(x)\;;\;k\in\mathbb Z\}$.
\end{definition*}

\begin{definition*}
A map $f:X \to X$ is called \emph{minimal} if all orbits are dense in $X$.
\end{definition*}

In \cite{Ke} M.\ Keane proved that almost all IETs for which  $\pi$ is irreducible are minimal. In this article we are only interested in transitive (minimal) IETs, so we assume that $\pi$ is always irreducible.
\begin{definition*}
A measure preserving map $f:(X,\mu) \to (X,\mu)$  is called \emph{uniquely ergodic} if it admits a unique invariant measure which is necessarily ergodic. 
\end{definition*}

H.\ Masur in \cite{Ma} and W.\ Veech in \cite{Ve2} proved that in case of irreducible permutations almost all IET are uniquely ergodic (and so, every invariant measure is a multiple of Lebesgue measure).

\subsection{Interval exchange transformations with flips}
fIETs are a generalization of IETs. Informally, an fIET $f$ is given by the partition of an interval $I$ into subintervals $I_{\alpha}$,
and a piecewise linear map from $\cup_{\alpha \in \mathcal{A}} \text{int} (I_{\alpha})$ into $I$ such that $f$ acts as an isometry on each $I_{\alpha}$, so that the images of interiors of partition elements  $I_{\alpha}$ do not overlap, and reverses the orientation of at least one subinterval  $I_{\alpha}$. 
Note that since the orientation of some intervals are reversed, defining $f$ on the left endpoints of the subintervals would not lead to a bijection of $I \to I$.
None the less if an fIET is minimal in the above sense, then if we take the closure of the symbolic model, then it is
minimal in the usual sense: every orbit is dense.

We proceed with the precise definition. Consider the interval $I=[0,1)$, its partition on subintervals $I_\alpha$ labeled by an alphabet
$\EuScript{A}$. Denote the length of the interval $I_\alpha$ by $\lambda_\alpha$ and consider the vector
 $\lambda=(\lambda_{\alpha})_{\alpha \in \EuScript{A}}$ (each component $\lambda_\alpha$ is positive).
Then  we define a pair of maps,  called a \emph{generalized permutation} $\hat\pi=(\pi_0,\pi_1),$ where $\pi_0: \EuScript{A}\mapsto \{1,\cdots,n\}$ is a bijection and $\pi_1: \EuScript{A}\mapsto \{-n,\cdots,-1,1,\cdots,n\}$ is a map with the absolute value $|\pi_1|$ of the map $\pi_1$ being a bijection. 
The pair of maps $\pi_0, |\pi_1|$ describe the order of the subintervals $I_\alpha$ before and after map is iterated. The
map  $\pi_1$ can be viewed as a signed permutation $\theta|\pi_1|$ where $\theta\in\{-1,1\}^{n}$. The signed permutation $\pi_1$ additionally
describes the set of flipped intervals which are marked by $\theta_\alpha=-1$. The set $F = \{\alpha\in  \EuScript A: \theta_\alpha=-1\}$ is called \emph{the flip set}.

\begin{definition*}
The fIET given by $(\hat \pi, \lambda) $ is the following map:
$$
f(x)=\begin{cases} x + w_\alpha & \mbox{if } x\in \text{int}(I_\alpha), \alpha \notin F, \\
w_\alpha-x, & \mbox{if } x\in \text{int}(I_\alpha), \alpha \in F, \end{cases}
$$
where $$w_\alpha = \sum_{|\pi_1|(\beta)<|\pi_1|(\alpha)}\lambda_\beta - \sum_{\pi_0(\beta)<\pi_0(\alpha)}\lambda_\beta,$$ if $\alpha\notin F$ and 
$$w_\alpha = \sum_{|\pi_1|(\beta)\le|\pi_1|(\alpha)}\lambda_\beta - \sum_{\pi_0(\beta)<\pi_0(\alpha)}\lambda_\beta$$ otherwise.
\end{definition*}

Throughout the article we will reserve the notation fIET to the case when the flip set is non-empty, and use the notation IET when the flip set is empty.

\begin{remark}\label{rem1}
IETs have a stronger property than minimality, they are forward and backwards minimal, i.e., every forward
orbit and every backwards orbit is dense.  Since some of orbits stop, this can not hold for fIETs, none the less our proof shows that this stronger property is almost true for fIETs:  every infinite forward orbit is dense, and the same for infinite backwards orbits.  
\end{remark}

As  mentioned above, the dynamics of fIETs  is diametrically opposite to the dynamics of classical IETs: the typical fIET has a periodic points (\cite{No}). The first example of a minimal fIET was constructed in \cite{No}. 
Some examples of minimal and uniquely ergodic fIETs  can be also found in \cite{GLMPZh,HL}, and a analyse of the number of periodic and minimal components of fIETs can be found in
\cite{NoPiTr}. 

\section{Rauzy induction}\label{Rauzy}
A detailed description of  Rauzy induction for fIETs can be found in \cite{No}, \cite{GLMPZh} and \cite{HL}. Here we present a brief scheme of the induction algorithm.

The term \emph{Rauzy induction} refers to  the operator  $R$ on the space of fIETs that associates to each fIET $f=(\lambda,\hat\pi)$ with irreducible $\hat \pi$ another fIET $f' = R(f) = R(\lambda,\hat \pi)$ which is the first return map induced by $f$ on a subinterval $[0,\nu]$ where $\nu$ is as follows:
$$
\nu=\begin{cases} \sum_{\alpha \in \EuScript{A}} \lambda_\alpha - \lambda_{\alpha_0}& \mbox{if }  \lambda_{\alpha_0}<\lambda_{\alpha_1}\\
\sum_{\alpha \in \EuScript{A}} \lambda_\alpha - \lambda_{\alpha_1}, & \mbox{if }  \lambda_{\alpha_0}>\lambda_{\alpha_1}. \end{cases}
$$
where $\alpha_0  := \pi^{-1}(n)$ and $\alpha_1 :=\pi_1^{-1}(\pm n)$.

Rauzy induction is not defined in the case that $ \lambda_{\alpha_0} = \lambda_{\alpha_1}$, the set of  fIETs satisfying this equality is of Hausdorff
dimension $n-1$ and thus does not interest us.

One can check that 
\begin{equation}\label{RI}
f'=(\lambda',\hat\pi')=\begin{cases} ((I_a(\hat\pi))^{-1}\lambda, a(\hat\pi))& \mbox{if }  \lambda_{\alpha_0}<\lambda_{\alpha_1}\\

((I_b(\hat\pi))^{-1}\lambda, b(\hat\pi)) & \mbox{if }  \lambda_{\alpha_0}>\lambda_{\alpha_1} \end{cases}
\end{equation}
where the transition matrices $I_a(\hat\pi), I_b(\hat\pi)\in SL(n,\mathbb Z)$ and transition maps $a(\hat\pi), b(\hat\pi)$ are defined below.

$$I_a(\hat\pi)=E+E_{\alpha_0,\alpha_1} \mbox{ and } I_b(\hat\pi)=E+E_{\alpha_1,\alpha_0}$$ where $E$ is an identity matrix and $E_{\alpha,\beta}$ is the elementary matrix containing $1$ as $(\alpha,\beta)$-th element.

Then the following four equations describe the  transition maps : 

$$
\mbox{if } \theta_{\alpha_0}:=1, \mbox{ then } a(\hat \pi)=\begin{cases}(\pi_0(\alpha), \pi_1(\alpha))& \mbox{if } |\pi_1|(\alpha)  \le \pi_1(\alpha_0) \\
(\pi_0(\alpha), \alpha_1) & \mbox{if } |\pi_1|(\alpha)=\pi_1 (\alpha_0) +1  \\
(\pi_0(\alpha),\pi_1(\alpha)+1) & \mbox{otherwise, } \end{cases} 
$$
$$
\mbox{if } \theta_{\alpha_1}=1, \mbox{ then }  b(\hat \pi):=\begin{cases}(\pi_0(\alpha),\pi_1(\alpha))& \mbox{if } \pi_0(\alpha) \le  \pi_0( \alpha_1)\\
(\alpha_0,\pi_1(\alpha)) & \mbox{if } \pi_0(\alpha)=\pi_0 (\alpha_1) + 1\\
(\pi_0(\alpha)+1,\pi_1(\alpha)) & \mbox{otherwise, } \end{cases}
$$
$$
\mbox{if } \theta_{\alpha_0}=-1, \mbox{ then }  a(\hat \pi):=\begin{cases}(\pi_0(\alpha),\pi_1(\alpha))& \mbox{if } |\pi_1|(\alpha) < \pi_1(\alpha_0) \\
(\pi_0(\alpha), \alpha_1) & \mbox{if } |\pi_1|(\alpha)=\alpha_0 \\
(\pi_0(\alpha),\pi_1(\alpha)+1) & \mbox{otherwise, } \end{cases} 
$$
$$
\mbox{if } \theta_{\alpha_1}=-1, \mbox{ then }  b(\hat \pi):=\begin{cases}(\pi_0(\alpha),\pi_1(\alpha))& \mbox{if } |\pi_1|(\alpha) < \pi_1(\alpha_0) \\
(\alpha_0,\pi_1(\alpha)) & \mbox{if } |\pi_1|(\alpha)=\alpha_0 \\
(\pi_0(\alpha)+1,\pi_1(\alpha)) & \mbox{otherwise}. \end{cases}
$$

See \cite{HL} for the more detailed description. 

In case that the operation $a$ was used, we say that the element $\alpha_0$ was the \emph{winner} and the element $|\alpha_1|$ was the \emph{loser}; 
in case that the operation $b$ was applied, the terminology is the opposite.

More generally, if the alphabet $\EuScript{A}$ has $n$ letters and  if $\alpha$ and $\beta$ were the last elements of non-signed permutation $\pi$, then, depending on the inequalities written above, we say that $\alpha$ is a winner and $\beta$ is a loser (or vice versa). 
Sometimes, since we also work with the signed permutation $\hat\pi,$ we can specify that $\alpha$ or $-\alpha$ was the winner or the loser. 
\begin{remark*} We will iterate the Rauzy induction map, however 
Rauzy induction is not defined everywhere, thus the iteration  stops if we arrive to a point outside its domain of definition  (see \cite{No} for an 
example when it stops). In case of an IET (without flips) which satisfies Keane's condition this never happens. 
\end{remark*}

\subsection{The Rauzy graph in the case of fIETs}
As in case of IETs, one can define Rauzy classes for fIETs. Given pairs $\hat\pi$ and $\hat \pi^{'}$, we say that $\hat\pi^{'}$ is a \emph{successor} of $\pi$ if there exist $\lambda, \lambda^{'}$ such that $R(\hat\pi,\lambda)=(\hat\pi^{'},\lambda^{'}).$ In the case of IETs, every permutation has exactly two successors; while for fIETs it has four successors.

However, the property that the successors of every irreducible permutations are also irreducible does not hold for fIETs. 
So, this relation defines a partial order on a set of irreducible permutations,  plus a  so called \emph{hole}, the set of all $(\hat \pi, \hat \lambda)$ without successors, the hole contains all
$(\hat \pi,\hat \lambda )$ with $\hat \pi$ reducible; 
this order can be represented by a direct graph $G$ that is called the \emph{Rauzy graph}.
As in case of IETs 
the connected components of this graph are called \emph{Rauzy diagram} or the \emph{Rauzy class}. Each Rauzy class $\EuScript{R}(\EuScript{A})$ contains a vertex that corresponds to the hole; there are no paths that start in the hole.

A \emph{path} of length $m\ge0$ in the diagram is a sequence (finite or infinite)
$v_0, \ldots, v_m,$ of vertices (signed permutations) and a sequence of arrows $a_1, \ldots a_m$ such that $a_i$ starts
at $v_{i-1}$ and ends in $v_i$. The following lemma holds:
\begin{lemma}
If  $\hat\pi$ and $\hat \pi^{'}$ are in the same Rauzy class and do not represent the hole then there exists an oriented
path in $G$ starting at $\hat\pi$ and ending at $\hat\pi^{'}$ that does not pass through the hole.
\end{lemma}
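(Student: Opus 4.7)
The strategy is to reduce the lemma to a local reversibility statement and then concatenate. By definition a Rauzy class is a connected component of $G$ in the \emph{undirected} sense, so there is an undirected path $\hat\pi=v_0,v_1,\ldots,v_m=\hat\pi'$ in $G$, each edge of which is a directed arrow traversed in one of the two possible directions. It therefore suffices to prove the following local statement: for every arrow $\hat\sigma\xrightarrow{\eta}\hat\sigma'$ of $G$ with $\eta\in\{a,b\}$ and neither endpoint in the hole, there exists a directed path in $G$ from $\hat\sigma'$ back to $\hat\sigma$ that avoids the hole. Once this is granted, we replace every ``backwards'' edge in the undirected path by the corresponding reversing directed subpath and concatenate, obtaining a directed path from $\hat\pi$ to $\hat\pi'$ avoiding the hole.

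To establish local reversibility I would exploit the explicit formulas displayed above for $a(\hat\pi)$ and $b(\hat\pi)$. Each Rauzy move shifts only the losing letter's position by one, leaving the rest of the combinatorial datum essentially unchanged, so iterating forward moves with the same winner/loser pair cycles this letter through its $n$ possible positions and returns the combinatorics to those of $\hat\sigma$ after a bounded number of steps. The identities of winner, loser, and move type $\eta$ that should be applied at $\hat\sigma'$ can be read off from $\hat\sigma'$ itself together with the knowledge of $\eta$ and of the loser of the original move, so the recipe for the reverse path is purely combinatorial.

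To ensure that the reverse path avoids the hole, I would use that, by hypothesis, $\hat\sigma'$ is not the hole, so there exists $\lambda'$ such that $(\hat\sigma',\lambda')$ admits arbitrarily many forward Rauzy iterates (for instance, a minimal fIET as constructed in \cite{No,GLMPZh}). The combinatorial cycling sequence above is realized by a non-empty open set of length vectors, and perturbing $\lambda'$ within this open set so that the first several induction steps follow the prescribed sequence keeps us away from the hole, since every intermediate permutation then possesses a valid successor.

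The main obstacle is the case analysis stemming from the four variants of the formulas for $a$ and $b$, which split according to $\theta_{\alpha_0},\theta_{\alpha_1}\in\{-1,+1\}$: one must check in each case that the prescribed sequence of forward moves is consistently defined (the required winner/loser can indeed be realized at every intermediate step) and terminates in at most a uniformly bounded number of iterations depending only on $n=|\EuScript{A}|$. Once this combinatorial bookkeeping is done, the concatenation argument above finishes the proof.
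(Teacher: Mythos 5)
Your proposal is a genuinely different strategy from the paper's. The paper invokes Viana's counting argument (Lemma~6.1 of \cite{Vi}): one shows that the set of vertices reachable from $\hat\pi'$ is both forward-closed (by definition) and backward-closed (by comparing in-degrees with out-degrees), and the single relevant modification for fIETs is that the hole has out-degree zero, so it never contributes outgoing arrows to the count. Your proposal instead reduces to a local reversibility statement and concatenates. This would be a valid alternative if the local statement were established, but there is a genuine gap.

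The gap is in the step intended to keep the reverse cycling path out of the hole. Whether a finite combinatorial Rauzy path from $\hat\sigma'$ passes through a reducible permutation depends only on the sequence of arrows, not on any length vector realizing it. Your argument reasons in the opposite direction: you assert that the prescribed cycling sequence ``is realized by a non-empty open set of length vectors'' and that perturbing $\lambda'$ to follow it ``keeps us away from the hole, since every intermediate permutation then possesses a valid successor.'' But possessing a length datum that continues the induction one more step does not make a permutation irreducible; realization by a length vector is compatible with passing through a reducible permutation. If, instead, you ask that $\lambda'$ come from a minimal fIET \emph{and} follow the prescribed cycle, you have already assumed the cycle avoids the hole, since the Rauzy orbit of a minimal fIET stays in the non-hole part of the graph by definition. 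The paper itself warns that, contrary to the IET case, ``the property that the successors of every irreducible permutations are also irreducible does not hold for fIETs,'' so there is no free lunch here: the cycling sequence may genuinely hit the hole and the construction stalls.

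Two secondary issues. First, the undirected path supplied by connectedness of the Rauzy class may pass through the hole vertex; since the hole has no outgoing arrows, an edge entering the hole that is traversed backwards cannot be replaced by a directed reversing path. You would need to argue separately that the Rauzy class with the hole deleted is still connected in the undirected sense. Second, the cycling claim itself (``iterating forward moves with the same winner/loser pair cycles this letter through its $n$ possible positions and returns to $\hat\sigma$'') is only asserted; for fIETs the four variants of $a$ and $b$ can change the sign $\theta$ of the loser depending on $\theta_{\alpha_0}$ or $\theta_{\alpha_1}$, so it is not automatic that iterating a single type of move returns the \emph{signed} permutation to $\hat\sigma$. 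You acknowledge this as ``combinatorial bookkeeping,'' but it is exactly the place where the fIET case differs from the IET case, so it cannot be left unverified. By contrast, the paper's counting argument sidesteps all of these constructions: it never needs to exhibit an explicit reversing path, only to observe that the hole is a sink.
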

\begin{proof}
Since there are no arrows that depart from the hole vertex, the counting argument for IETs (see \cite{Vi}, Lemma 6.1) works in our case as well. 
\end{proof}

As it was mentioned above, the Rauzy induction data contains two parts: metrical (lengths of the intervals) and combinatorial (vertices of the Rauzy diagram).
The following  lemma holds:
\begin{lemma}
Each infinite path in the Rauzy diagram corresponds to a minimal fIET. 
\end{lemma}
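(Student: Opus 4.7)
The plan is to adapt Keane's classical argument for minimality of IETs to the flipped setting, following Nogueira~\cite{No}. Given an infinite Rauzy path issued from $(\hat\pi,\lambda)$, let $f_k = R^k(f)$ denote the induced fIETs on the nested inducing intervals $I^{(k)} = [0,\nu_k)$, with corresponding length data $\lambda^{(k)}$.

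The first step is to prove that every coordinate $\lambda_\alpha^{(k)} \to 0$, and in particular $\nu_k \to 0$. Since a letter that does not lose at step $k$ keeps the same length, the sequence $(\lambda_\alpha^{(k)})_{k \ge 0}$ is non-increasing and bounded below by $0$, hence converges to some $L_\alpha \ge 0$. From $\nu_{k+1} = \nu_k - \min(\lambda_{\alpha_0}^{(k)},\lambda_{\alpha_1}^{(k)})$ we read off $\sum_k \min(\lambda_{\alpha_0}^{(k)},\lambda_{\alpha_1}^{(k)}) \le |I|$, so this minimum tends to zero. Assuming for contradiction that $B = \{\alpha : L_\alpha > 0\}$ is non-empty, one shows that from some step on the competing pair $\{\alpha_0^{(k)},\alpha_1^{(k)}\}$ must intersect $B^c$, and then that the letters of $B$ must fill the initial positions of both $\pi_0^{(k)}$ and $|\pi_1^{(k)}|$; irreducibility of the permutations visited (forced by the fact that the hole has no outgoing arrows, together with the connectivity lemma just proved) then rules this out, contradicting $B \neq \emptyset$.

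Once $\lambda_\alpha^{(k)} \to 0$ for every letter, I would use the Rokhlin tower decomposition: up to the finite set of points whose forward $f$-orbit lands on a flip discontinuity, $I$ is partitioned into $|\EuScript A|$ towers over the subintervals $I_\alpha^{(k)} \subset I^{(k)}$, each floor being a sub-interval of $I$ of width $\lambda_\alpha^{(k)}$ on which an iterate of $f$ acts as an isometry (orientation-preserving or reversing according to the parity of flipped letters traversed). Given any $x$ with infinite forward orbit and any open $J \subset I$, choose $k$ large enough that some tower floor sits inside $J$; since the orbit of $x$ returns to $I^{(k)}$ infinitely often and between consecutive returns visits every floor of one of the towers, it must enter $J$. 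The symmetric argument applied to $f^{-1}$ handles infinite backward orbits, yielding the strengthened property stated in Remark~\ref{rem1}.

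The hardest part of this scheme is the combinatorial step showing $B = \emptyset$; the rest is a routine adaptation of the IET proof, since the flips intervene only in the orientation of the isometries within individual tower floors and play no role in the density conclusion.
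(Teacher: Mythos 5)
The paper's proof of this lemma is a one-line reference to Theorem~A of \cite{HL}, so your proposal—reconstructing Keane's argument from scratch—is a genuinely different (and more self-contained) route. The scheme is sound in outline, and step~1 ($\lambda_\alpha^{(k)}\to 0$, which you correctly flag as the delicate combinatorial part) is essentially what Proposition~10 of \cite{HL} establishes and what the paper itself invokes elsewhere for completeness/positivity of long paths.

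However, there is a real gap in the density step. You choose $k$ so that some tower floor $F=f^j(I_\alpha^{(k)})$ lies inside $J$, note that the orbit of $x$ returns to $I^{(k)}$ infinitely often, and that between returns it sweeps all floors of \emph{one} tower per excursion. That does not yield that the orbit enters $J$: the sequence of towers actually traversed is governed by the $f_k$-itinerary of the first entry point, and there is nothing so far that forces this itinerary to ever reach $I_\alpha^{(k)}$. Indeed, ``the orbit eventually visits the tower over $I_\alpha^{(k)}$'' is precisely the statement that the induced map $f_k=R^k f$ has a dense orbit, which is equivalent to what you are trying to prove; as written the argument is circular. The fix is to use positivity of the Rauzy cocycle, which does follow from your step~1 (or from \cite{HL}): take $k'>k$ with $B_{[k,k']}$ strictly positive. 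Then every level-$k'$ tower, traced through $f$, passes through every $I_\alpha^{(k)}$ and hence through every level-$k$ floor, in particular through $F\subset J$. Since the orbit of $x$ eventually enters \emph{some} level-$k'$ tower, it then enters $J$. With this extra iteration of the tower/positivity argument (and a complete write-up of the $B=\emptyset$ combinatorics) the proposal becomes a valid direct proof.
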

\begin{proof}
See Theorem A in \cite{HL}. Infiniteness of the path means that the Rauzy induction is applicable infinite amount of times without finishing in the hole; it is exactly equivalent to the conditions that all the permutations obtained during the Rauzy induction application are irreducible and $\EuScript{O}(\EuScript{R})$ is infinite (see Section 3 of \cite{HL} for the definitions).
\end{proof}

In fact, every infinite path corresponds to a minimal fIET in the stronger sense mention in Remark \ref{rem1}.

For each Rauzy class $\EuScript R$ we denote the set of paths on it by $\Pi(\EuScript{R}$) (and refer to this set of paths as the Rauzy class);
for each path $\gamma$ (finite of infinite) there is the Rauzy operator $R_{\gamma}$ that corresponds to it. The matrix of the Rauzy induction, that is the product of $I_a$ and $I_b$, is also denoted by $R_{\gamma}$. We denote by the matrix $B_\gamma=R_\gamma^T$ sometimes will be referred as \emph{matrix of the cocycle} (the same construction is used in the case of IETs in connection with zippered rectangles as a suspension model, see \cite{Ve2} and \cite{Vi}). 

\begin{definition*}
Let $\EuScript{R}$ be a Rauzy class. A path $\gamma\in \Pi(\EuScript{R})$ is called
\emph{complete} if every letter $\alpha$ of the alphabet $\EuScript{A}$ is the winner of some arrow composing $\gamma$.
\end{definition*}
\begin{definition*}
We say that $\gamma\in \Pi(\EuScript{R})$ is \emph{positive} if all entries of the matrix corresponding to $R_\gamma$ are positive.
\end{definition*}

A more precise description of the positive paths in case of fIETs can be found in \cite{No} (Lemma 2.1).

\subsection{Acceleration}\label{sec.acceleration}
Rauzy induction has fixed neutral points which means that any absolutely continuous invariant measure is necessarily infinite.
 In the case of IETs  A.\  Zorich introduced an \emph{accelerated} algorithm of Rauzy induction (\cite{Zo}). It is an analogue of the acceleration of the Euclid algorithm.
This idea can be used directly in case of fIETs without any significant changes.

As was mentioned above, the matrix of Rauzy induction is a product of matrices $I_a$ and $I_b$ for different $a$ and $b$.
Let us for each stage $j$ of Rauzy induction define the indicator $\epsilon\in\{-1,1\}$ such that $\epsilon=1$ if one applied matrix $I_a$ and $\epsilon=-1$ if $I_b$ was the matrix of the corresponding stage of the Rauzy induction.
Then, \emph{Zorich induction} is given by the following operator:
$$(\lambda',\pi')=Z(\pi,\lambda)=R^n(\pi,\lambda),$$ where $n$ is the smallest $j \ge 1$ one such that $\epsilon^{(j)}=-\epsilon^{(0)}.$

One can associate Rauzy graphs with the accelerated induction in the same way as for the usual induction. In this paper we work with minimal fIETs. We exclude  the hole vertex from the  Rauzy graphs (we call this exclusion an ``adjustment'').

As in case of IETs, the following holds (see \cite{AvGoYo}):
\begin{lemma}
Every path that is long enough in terms of the accelerated Rauzy induction is complete and, moreover, positive.
\end{lemma}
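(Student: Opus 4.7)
The plan is to mirror the argument given by Avila--Gouëzel--Yoccoz for IETs, verifying that its two main ingredients survive after the hole vertex has been excised from the Rauzy graph. The two steps are completeness (every letter must eventually win), followed by positivity, which one deduces from completeness after a bounded number of additional iterations.

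For completeness, I would exploit the previous lemma in this section: any two non-hole vertices of the Rauzy graph are joined by an oriented path avoiding the hole. Together with the finiteness of the set of irreducible generalized permutations, this shows that the adjusted Rauzy graph is a finite strongly connected digraph, and the same is true for its Zorich acceleration (whose vertices coincide with those of the adjusted graph and whose arrows correspond to maximal monochromatic Rauzy strings). Fix a letter $\alpha\in \EuScript{A}$. Because $\alpha$ must play a non-trivial role somewhere in the class (otherwise it could be dropped, contradicting irreducibility), there is at least one Zorich arrow at which $\alpha$ is the winner; by strong connectedness there is then a uniform constant $L_{\alpha}$ such that any oriented path in the adjusted graph of length $\ge L_{\alpha}$ visits such an arrow. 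Taking $L=\max_{\alpha} L_{\alpha}$, any path longer than $L$ is complete.

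For positivity, recall that each Rauzy arrow multiplies $B_{\gamma}$ by a matrix of the form $E+E_{\alpha,\beta}$ with $\alpha$ the winner and $\beta$ the loser, which amounts to adding the $\alpha$-column to the $\beta$-column of $B_{\gamma}$. A standard induction shows that once the path is complete and one traverses the graph a further bounded number of times, every entry of the product becomes strictly positive: after each letter has won at least once, subsequent winner-loser events transfer the already accumulated strictly positive column entries to every other column in at most $n$ further ``full rounds.'' Taking $L'=C\cdot n\cdot L$ for some universal constant $C$ depending only on the diameter of the graph, every path of length $\ge L'$ is both complete and positive.

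The main obstacle, and the only genuinely new point compared with the IET setting, is the strong connectedness of the adjusted Rauzy graph, which relies on the previous lemma in this section; once that is in place, both the completeness and positivity steps are purely combinatorial and are carried over verbatim from the Avila--Gouëzel--Yoccoz argument.
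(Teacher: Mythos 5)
Your completeness argument has a genuine gap. You claim that strong connectedness of the adjusted Rauzy graph implies that there is a uniform constant $L_{\alpha}$ such that every oriented path of length $\ge L_{\alpha}$ visits some arrow where $\alpha$ wins. Strong connectedness only guarantees the \emph{existence} of a path between any two vertices; it does not force every sufficiently long path to traverse a particular arrow. A strongly connected digraph can easily contain a cycle that avoids a given arrow entirely, and a path winding around such a cycle is arbitrarily long while never using that arrow. So strong connectedness, on its own, is simply not the right mechanism to extract completeness; some additional information about the combinatorics of Rauzy induction is needed.

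The paper's argument instead uses a dynamical fact: an interval $I_\alpha$ shrinks under a step of Rauzy induction if and only if $\alpha$ is the winner of that step, and Proposition~10 of \cite{HL} asserts that along a sufficiently long accelerated path \emph{all} intervals shrink. Completeness follows immediately. Your positivity step (bounded number of further ``rounds'' after completeness) is consistent in spirit with the paper's appeal to the $2n-3$ concatenated complete paths bound from \cite{MaMoYo}, but it is moot as written because it is built on the flawed completeness step. To repair your argument you would need to replace the strong-connectedness claim with the length-shrinking argument from \cite{HL} (or an equivalent combinatorial statement specific to Rauzy induction), after which the rest goes through.
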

\begin{proof}
An interval gets smaller at a step of the Rauzy induction if and only if the letter labeling this interval is a winner.
Proposition 10 in \cite{HL} implies that all the intervals are getting smaller if the path is long enough, thus 
 each letter in the alphabet must be a winner at least once.
 
On the other hand, every path which is a concatenation of at least $2n-3$ complete paths is positive. The proof repeats literally the one for IETs (see \cite{MaMoYo} for technical details). 
\end{proof}

\begin{remark*}
The statement about positivity of every path that is long enough can be also extracted from Lemma 2.1 in \cite{No}. There only minimal fIET were considered but the proof works in the same way even if fIET is not minimal but allows a sufficient amount of the iterations of the Rauzy induction.
\end{remark*}

\section{Markov map: definitions and properties}\label{Markov}
\subsection{Definition}
The Markov map $T$ is the projectivization of the induction map described above: 
$T(\hat\pi,\lambda) :=\big(\hat\pi' , \frac{\lambda'}{|\lambda|'}\big)=\Big (\hat\pi' , \frac{R^{-1}_{\gamma}\lambda}{||R^{-1}_{\gamma}\lambda||}\Big )$where $|\lambda| :=\beta_n(\lambda)=\sum_{\alpha \in \EuScript{A}} \lambda_\alpha$ and $\lambda', \hat\pi'$ are defined by the accelerated Rauzy induction (if the acceleration of Rauzy induction  is not defined at a point then we define its image by the last possible application of (\ref{RI})).

\subsection{Markov partition}
One can check that the map $T$ determines Markov partition $\Delta^{(l)}$ of the parameter space like in case of orientation-preserving IETs (see \cite{Vi}). 
The only difference is that the induction stops in some Markov cells; we are interested in the set of points where induction can be applied for an  infinite time. 
 
\subsection{Markov shift}
One can also consider the action of the non-accelerated Rauzy induction on the accelerated adjusted Rauzy graph. Then, each vertex of the adjusted Rauzy graph will split into countable number of vertices, and the same happens to the corresponding Markov cell. Then  the Rauzy induction corresponds to a Markov shift $\sigma$ in this coding on a countable alphabet. One can associate in a natural way a graph $\Gamma$ with such a Markov shift. $\Gamma$ can be obtained from the Rauzy graph by dividing every vertex into a countable number of vertices and adding a required arrows between these new vertices. 

\begin{definition*}
A countable Markov shift $\Theta$ with transition matrix $U$ and set of states $\EuScript{S}$ satisfies the  \emph{big images and pre-images property} (BIP) if there exist 
a finite subset of the states of the Markov shift such that the image under the action of the Markov shift of any state contains some element of this finite set, and furthermore the image of this finite subset contains the whole set of states. 
\end{definition*} 

As in case of IETs, the following  lemma holds for fIETs:
\begin{lemma}
The Markov shift $\sigma$ satisfies the BIP property.
\end{lemma}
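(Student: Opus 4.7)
The plan is to verify the two conditions in the definition of BIP by reducing to the strong connectivity of the adjusted accelerated Rauzy graph $G'$. This strong connectivity follows from the first lemma in Section~\ref{Rauzy}, which provides, for any pair of non-hole vertices of the Rauzy graph, an oriented path between them avoiding the hole; after the adjustment (which excludes the hole), every vertex of $G'$ has both incoming and outgoing arrows inside $G'$.

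Recall that the countable alphabet $\EuScript{S}$ of $\sigma$ is obtained by subdividing each vertex of $G'$ into countably many states, with a natural projection $\pi : \EuScript{S} \to V(G')$; one-step transitions in $\sigma$ project onto arrows of $G'$. For the finite set $F \subset \EuScript{S}$, I pick one representative $f_v \in \EuScript{S}$ with $\pi(f_v) = v$ for each vertex $v \in V(G')$. Since $V(G')$ is finite, $F$ is finite. For big preimages, given any $s \in \EuScript{S}$ with $\pi(s) = v$, strong connectivity provides an arrow $v \to w$ in $G'$; the state $s$ then admits a transition to some state above $w$ and, by the flexibility built into the refinement $\Gamma \to G'$, this target can be taken to be $f_w \in F$. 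Symmetrically, for big images, given $s' \in \EuScript{S}$ with $\pi(s') = w$, strong connectivity provides an arrow $v \to w$; then $f_v \in F$ admits a transition to $s'$.

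The main obstacle is unpacking the countable refinement $\Gamma$: one must check that transitions in $\sigma$ between states above arrow-connected vertices of $G'$ can be realized with enough flexibility to hit or start from any prescribed representative. This bookkeeping follows the standard setup for classical IETs (see \cite{AvGoYo}), and since non-accelerated Rauzy steps inside an accelerated Zorich cell behave in the same way for fIETs as for IETs, the construction extends routinely. The genuinely fIET-specific input --- strong connectivity of $G'$ after removing the hole --- is supplied directly by the results of Section~\ref{Rauzy}.
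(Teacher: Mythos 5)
Your proof follows the same approach as the paper's: choose one representative above each vertex of the adjusted accelerated Rauzy graph, and deduce BIP from the (strong) connectivity of that graph supplied by the first lemma of Section~\ref{Rauzy}. The paper's own proof is a one-liner (``one can check that it is enough to choose the finite subset of states such that each belongs to a different vertex...''), so in spirit you are doing exactly what the paper intends.

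That said, there is a gap in your big-images step that you only gesture at. You claim that given $s'$ above $w$ and an arrow $v\to w$ in $G'$, ``$f_v\in F$ admits a transition to $s'$.'' But a state of the countable shift has a \emph{single} fixed target vertex (determined by the block of non-accelerated moves it encodes), so the chosen $f_v$ targets one particular out-neighbour of $v$, not an arbitrary $w$. With only one representative per source vertex, the set of targets $\{\text{target}(f_v)\}_{v}$ need not cover all vertices, so the argument as stated does not give big images. To close this, one should either (a) show that the $f_v$ can be chosen so that their targets cover $V(G')$ (a system-of-distinct-representatives type statement, not automatic from strong connectivity alone), (b) take several representatives per vertex --- say one per arrow of the finite accelerated graph, which is still a finite set --- so that every vertex is the target of some $f\in F$, or (c) invoke the Markov structure directly: each cell of the partition is mapped by $T$ diffeomorphically onto the full simplex of its target permutation, so the image of \emph{any} state is the set of all states above that target vertex, which combined with connectivity yields both conditions. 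Your appeal to ``the flexibility built into the refinement'' points in the right direction but does not identify which of these mechanisms is being used, so the verification is not complete. Since the paper itself omits the check, your sketch is at roughly the same level of rigor; the specific claim ``$f_v$ admits a transition to $s'$'' as written, however, is not justified.
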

\begin{proof}
One can check that it is enough to choose the finite subset of states such that each belongs to a different vertex of the accelerated Rauzy graph (for each Rauzy class);
note that the number of classes is finite and depends on $n$ (this determines  the parameter $m$ in the definition).  
\end{proof}

\subsection{The Markov map is uniformly expanding}
As  mentioned above, we use the so called Manhattan norm for vectors: 
$||v||=\sum_{i=1}^{n}v_i,$ note that all the vectors we work with are positive. We will also need the operator norm $|A|=\sup_{||v||=1}{Av}$ and the norm on continuous functions on compact sets: $||f||_{C^{0}(\Delta)}=\sup_{x\in\Delta}|f(x)|.$

\begin{definition*}
Let $L$ be a finite or countable set, let $\Delta$ be a parameter space, and let $\{\Delta^{(l)}\}_{(l\in L)}$ be a partition into open sets of a full measure subset of $\Delta.$
A map $Q: \cup_{l} \Delta^{(l)} \rightarrow \Delta$ is a \emph{uniformly expanding} map if:
\begin{itemize}
\item there exist a constant $k>1$ such that for each $l \in L$, $Q$ is a $C^{1}$ diffeomorphism between $\Delta^{(l)}$ and $\Delta$, and there exist a constant  $C_{(l)}$ such that 
$$k||v||\le||DQ(x)v||\le C_{(l)}||v||$$
for all $x\in \Delta^{(l)}$ and all tangent vectors $v\in T_{x}\Delta$ where  $DQ(x)$ denotes the derivative operator of $Q$ evaluation at $x$.
\item Let $J(x)$ be the inverse of the Jacobian of $Q$ with respect to Lebesgue measure. Denote by $\EuScript{H}$ the set of inverse branches of $Q$. The function $log J$ is $C^{1}$ on each set $\Delta^{(l)}$ and there exists $C>0$ such that, for all $h\in \EuScript{H}$,
$$||D((logJ)\circ h)||_{C^{0}(\Delta)}\le C.$$ 
\end{itemize}
\end{definition*}
We need also one more technical definition (see \cite{AvGoYo}):
\begin{definition*}
The positive path $\gamma$ on the Rauzy graph is called a \emph{neat} if starts and ends in the same vertex of the graph and the following condition holds: 
if $\gamma=\gamma_{s}\gamma_{0}=\gamma_{0}\gamma_{e}$ for some $\gamma_s$ and $\gamma_e,$ then either $\gamma=\gamma_0$ or $\gamma_0$ is trivial.
\end{definition*}

\begin{remark*}
The definition of a neat guarantees that the associated induction matrix is strictly positive.
\end{remark*}

We are mainly interested in minimal fIET, so we can assume that all paths we work with are long enough and so contain a neat. 
Taking a longer path is equivalent  to  accelerating the induction.  Zorich proposes to merge all the steps that correspond to the case when one iteration comprises all consequent steps where the winner is the same. Here we want to accelerate even more;  we 
would like that that all steps before the matrix will become positive to represent one iteration of the induction. We realize this induction by 
  taking a small subsimplex of the original parameter space and consider only the orbits that go back to this subsimplex.
If one prefers to work with the suspension model, we consider a special precompact section of the parameter space (see sections 4.1.2 - 4.1.3 in \cite{AvGoYo} for the details) and the first return map to this section. Naturally, in such a way orbits that never go back escape our control; however, the amount of these orbits among minimal orbits is negligible (see section \ref{correctness} for a precise statement). 

\begin{lemma}  The map 
$T$ is  uniformly expanding with respect to the Markov partition $(\Delta^{(l)})$.
\end{lemma}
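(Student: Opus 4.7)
The plan is to realize $T$ on each cell $\Delta^{(l)}$ as the projective action of a strictly positive integer matrix, so that the two items in the definition reduce to standard estimates on projective actions of positive matrices, exactly as in the orientation-preserving case of \cite{AvGoYo}. By construction each cell $\Delta^{(l)}$ is labeled by a path $\gamma_l$ of the accelerated Rauzy induction with
$$T\big|_{\Delta^{(l)}}(x) \;=\; \frac{B_{\gamma_l}^{-1} x}{\|B_{\gamma_l}^{-1} x\|}, \qquad B_{\gamma_l} \in GL(n,\Z), \quad |\det B_{\gamma_l}| = 1.$$
The acceleration by first return to a small subsimplex (Subsection \ref{sec.acceleration}) together with the remark after the definition of a neat path forces every $\gamma_l$ to contain a neat positive subpath, so that $B_{\gamma_l}$ has strictly positive entries. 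The associated inverse branches $h_l(y) = B_{\gamma_l} y / \|B_{\gamma_l} y\|$ send the target simplex into one fixed precompact subset of the open simplex.

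For the first bullet, strict positivity of $B_{\gamma_l}$ gives, by Birkhoff--Hopf, a strict contraction of $h_l$ in Hilbert's projective metric. Because all images $h_l(\Delta) = \Delta^{(l)}$ lie in the same precompact subset of the open simplex, the Hilbert metric is there uniformly bi-Lipschitz equivalent to the Manhattan-induced Euclidean metric, which produces a uniform constant $k > 1$ with $\|DT(x) v\| \geq k \|v\|$ for $x \in \Delta^{(l)}$. The upper bound $\|DT(x) v\| \leq C_{(l)} \|v\|$ is immediate with $C_{(l)} = \|B_{\gamma_l}^{-1}\|$.

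For the second bullet, a direct computation using $|\det B_{\gamma_l}| = 1$ shows that, in an appropriate affine chart on $\Delta$,
$$\log J(h_l(y)) \;=\; -n \log \|B_{\gamma_l} y\| + \mathrm{const}.$$
Its derivative is a linear functional on $\Delta$ whose norm is controlled by the extremal ratios of entries of $B_{\gamma_l}$, and the same Hilbert-metric argument from the previous paragraph bounds these ratios uniformly in $l$, producing a uniform constant $C$ with $\|D((\log J) \circ h_l)\|_{C^0(\Delta)} \leq C$.

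The main obstacle is really only bookkeeping: one must verify that the precompactness construction from Sections 4.1.2--4.1.3 of \cite{AvGoYo} carries over to fIETs despite the presence of holes in the Rauzy graph. This is handled by the adjustment of the graph (Subsection \ref{Rauzy}): we restrict $T$ to orbits that never enter the hole, which is exactly the set of minimal fIETs on which $T$ is the object of interest. Once this is in place, the finitely many adjusted Rauzy classes behave uniformly and the orientation-preserving argument transfers verbatim.
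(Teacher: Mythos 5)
Your proposal is correct and follows essentially the same route as the paper: both reduce the first item to the strict positivity of the cocycle matrix coming from the neat/complete path (Hilbert--Birkhoff contraction on the precompact section, made uniform by the first-return structure), and both obtain the second item from Veech's formula $J\circ h = \|R_\gamma\lambda\|^{-n}$ and the resulting Lipschitz estimate for $\log J\circ h$ in the Hilbert metric. The paper additionally writes out the explicit Jacobian bound $J_{Q'} \le n^{-n}$ via the column sums of the positive integer matrix, but this is a presentational choice rather than a different mechanism; your emphasis on the precompactness of $\Delta_{\gamma_*}$ (so that the Hilbert and Manhattan metrics are uniformly comparable on both the source and the target of the return map) is exactly what makes the contraction constant uniform, and is the same point the paper handles by accelerating and taking a first return, following Sections 4.1.2--4.1.3 of \cite{AvGoYo}.
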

\begin{proof}
The same lemma was proved for IETs in \cite{AvGoYo} (Lemma 4.3) and for linear involutions in \cite{AvRe} (Lemma 6.1). The main idea of the proof remains the same for our case and is briefly described below.

The first part of the definition of uniformly expanding for paths we work with follows from the positivity. More precisely, as mentioned above, we consider a long enough path $\gamma$ such that it contains some positive neat.
Then the induction matrix $R_{\gamma}^{-1}$ is a product of two matrices such that one of them is weakly contracting (is not expanding) and another one is strongly contracting (contraction coefficient is strongly larger than 1) with respect to Hilbert metric. The strong contraction comes from the positivity of the neat. 

More precisely, we do the following. Since the path we work with is positive, we can consider the inverse map of the map $Q$:
$$Q': \Delta\rightarrow\Delta^{l}$$ and prove that $Q'$ is uniformly contracting.
It is easy to check that $Q'\lambda' = \frac{C\lambda'}{|C\lambda'|},$ where $C$ is the matrix with strictly positive coefficients, relating the old and new coordinates (with respect to the Rauzy induction)  in the following way:


Then for each $\alpha \in \EuScript{A}$ we have
$$
   \sum_{\beta \in \EuScript{A}} c_{\alpha,\beta} \lambda'_{\beta}=\lambda_\alpha, 
$$

Now, using a result of Veech (see Lemma 3.1 in \cite{No}), we have  $J_{Q'}=\frac{1}{|C\lambda'|^{n}}=\frac{1}{\sum_{\alpha \in \EuScript{A}} C_\alpha \lambda'_{\alpha}},$
where $C_\alpha := \sum_{\beta \in \EuScript{A}} c_{\beta,\alpha}$ and $J$ is the  Jacobian (derivative). 
We have a renormalizing condition $\sum_{\alpha \in \EuScript{A}} \lambda'_\alpha =1$. 
Using this renormalizing condition, one can easily check that $\frac{1}{(\max_\alpha(C_\alpha))^{n}} \le J_Q' \le\frac{1}{(\min_\alpha(C_\alpha))^{n}}\le \frac{1}{n^n},$ the last inequality follows since $C$ is a positive integer valued matrix. 
The statement for $Q$ follows.

The second part of the definition of uniformly expanding can be verified as follows. First, we notice that an inverse
branch of the map $T$ can be written as $h(\hat\pi, \lambda)=(\hat \pi, \frac{R_{\gamma}\lambda}{||R_{\gamma}\lambda||})$.
Veech showed that
is $J\circ h=\frac{1}{||R_{\gamma}\lambda||^n}$ where $n$ is the number of
intervals of the IET \cite{Ve1}, Proposition 5.3; the proof holds verbatum for fIETs.
So we have:
$$\frac{J\circ h(\hat \pi, \lambda)}{J \circ h(\hat \pi, \lambda')}=\left(\frac{||R_{\gamma}\lambda||}{||R_{\gamma}\lambda'||}\right)^n \le \sup_{\alpha\in{1,\cdots,n}}\left(\frac{\lambda_{\alpha}}{\lambda'_{\alpha}}\right)^n \le e^{n\cdot dist(\lambda,\lambda')}.$$
and thus, $log J\circ h$ is Lipshitz with respect to the Hilbert metric (which is denoted by $dist$).
\end{proof}
\section{The distortion estimates}\label{Distortion}
\subsection{Conditional probabilities}
The distortion argument will  involve the study of the  forward images of the Lebesgue measure under the renormalization map. 
Following the strategy from \cite{AvGoYo} and \cite{AvRe}, we first construct a class of measures which is invariant as a class.

Let us consider the accelarated Rauzy graph and some path $\gamma$ in it. Let us fix the vertex $\hat\pi$ of this graph and the corresponding Rauzy class $\EuScript{R}$.

As in Section \ref{sec.acceleration}, $B_{\gamma}$ is the matrix of the cocycle corresponding to $\gamma$, and $B_\gamma^T$ denotes its transpose. 
Let $\mathbb{R}_+^n$ denote the positive cone and  $\langle \cdot, \cdot \rangle$ denote the  inner product, then define 
$$\Lambda_q := \{\lambda\in\mathbb R_+^{n}: \left\langle {\lambda,q} \right\rangle<1\},$$ and
$$\Delta'_{\gamma} := B^{T}_{\gamma}\mathbb R^{n}_{+}.$$

For $q=(q_1, \cdots, q_n) \in \mathbb R^{n}_{+}$ we define a measure $\nu_{q}$ on the $\sigma$-algebra $A\subset\mathbb R_+^{n}$ of Borel sets which are positively invariant (i.e. $\mathbb{R}_+ A = A)$:
$$\nu_{q}(A) := Leb(A \cap \Lambda_q).$$

Equivalently, $\nu_{q}$ can be considered as a measure on the projective space $\mathbb{R}P^{n-1}_{+}.$
Using Proposition 5.4 in \cite{Ve1}, one can check that 
$$\nu_{q}(\mathbb R^n_{+})=\frac{1}{n!q_1\cdots q_n},$$
$$\nu_{q}(\Delta'_\gamma)=\frac{1}{n!(B_{\gamma}q)_{1}\cdots (B_{\gamma}q)_{n}},$$
and 
$$\nu_{q}(B^{T}_{\gamma}A)=Leb(B^{T}_{\gamma}A\cap \Lambda_q)=Leb(
A\cap \Lambda_{B_{\gamma}q})=\nu_{B_{\gamma}q}(A).$$

The measures $\nu_q$ are used to calculate the probabilities of realization of different types of combinatorics related to the induction.

Let $\EuScript{R}$ be a Rauzy class and let $\gamma\in\Pi(\EuScript{R})$. Let $\Lambda_{q,\gamma} :=
\Lambda_{B_{\gamma}q}$. If $\Gamma\in\Pi(\EuScript{R})$ is a set of paths starting with the same $\hat \pi\in\EuScript{R}$, let $\Lambda_{q,\Gamma}=\cup_{\gamma\in\Gamma}\Lambda_{q,\gamma}$. We define
$$P_q(\Gamma|\gamma) := \frac{Leb(\Lambda_{q,\Gamma_{\gamma}})}{Leb(\Lambda_{q,\gamma})}=\frac{\nu_{q}(\cup_{\gamma'\in\Gamma_{\gamma}}\Delta'_{\gamma'})}{\nu_{q}(\Delta'_{\gamma})},$$ 
where $\Gamma_{\gamma}\subset\Gamma$ is the set of paths starting by $\gamma$.

Suppose $\gamma$ is a path of length one (an arrow),  let $\hat \pi\in \EuScript{R}$ be the permutation from which $\gamma$ starts and denote by $\alpha\in \EuScript{A}$ the winner and by $\beta\in \EuScript{A}$ the loser of the first iteration of the Rauzy induction (without acceleration). 
Then, the conditional probability related to the given combinatorics is defined by
$$P_{q}(\gamma|\hat \pi) :=\frac{q_{\beta}}{(q_{\alpha}+q_{\beta})}.$$

For a long path, for $\EuScript{A'}\subset \EuScript{A}$ and $q\in \mathbb R^{\EuScript{A}}_{+}=\mathbb R^{n}_{+},$ let $N_{\EuScript{A'}}(q) :=\prod_{\alpha\in \EuScript{A'}}q_{\alpha}$  and, let $N(q) :=N_{\EuScript{A}}(q)$, and define $$P_{q}(\gamma|\hat \pi) :=\frac {N(q)}{N(B_{\gamma}q)}.$$ 

Let us introduce a partial order on the set of paths: for two given path $\gamma, \gamma^{'}$ we say that $\gamma\le\gamma^{'}$ if $\gamma^{'}=\gamma\gamma_e$ for some $\gamma_{e}$. We say that the subset $\Gamma$ is \emph{disjoint} if no two elements are comparable with respect to this order. 
Now, for every family $\Gamma\subset\Pi(\EuScript{R})$  such that any $\gamma\in\Gamma$ start by some element $\gamma_s\in\Gamma_s,$ for every $\hat \pi\in\EuScript{R}$ we define
\begin{equation}\label{defPq}
P_q(\Gamma|\hat\pi) := \sum_{\gamma_s\in\Gamma_s}P_q(\Gamma|\gamma_s)P_q(\gamma_s|\hat\pi).
\end{equation}
Note also that 
\begin{equation}\label{Gamma|pi}
P_q(\Gamma|\hat\pi)\le P_q(\Gamma_s| \hat \pi)\sup_{\gamma_s\in\Gamma_s}P_q(\Gamma|\gamma_s).
\end{equation}

\subsection{Kerckhoff lemma}
In this section we prove the key estimate necessary for the estimation of the distortion properties of the cocycle matrix. 
The idea that was used for the first time in \cite{Ker} for IETs is the following: in order to control how the induction distorts a vector which was originally well balanced, one has to check that the ratio between the norms of rows (or equivalently, columns) of the matrix of the cocycle (equivalently, of the induction matrix) can rarely be very high.
More formally, we have the following: 
\begin{lemma}\label{Ker}
Let $\hat\pi\in\EuScript R$ be irreducible. 
For any $T>1, q \in \mathbb{R}^{\EuScript{A}}_{+}, \alpha \in \EuScript{A}$

$$P_{q}\Big ( \big \{ \gamma \in \Gamma_{\alpha}(\hat \pi): (B_{\gamma}q)_{\alpha}>Tq_{\alpha} \big \} \big | \hat \pi \Big )<\frac{n}{T},
$$
where $\Gamma_{\alpha}(\hat \pi)$ denotes the set of paths starting at $\hat \pi$ with no winner equal to $\alpha$ and $n$ is the
number of intervals of the fIET.
\end{lemma}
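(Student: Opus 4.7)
The plan is to interpret $P_q(\cdot \mid \hat\pi)$ as a probability measure on infinite paths starting at $\hat\pi$ (treating the hole and the event ``$\alpha$ has already been a winner'' as absorbing), and then to apply Doob's maximal inequality to a nonnegative martingale built from the $\alpha$-coordinate of the cocycle. For a random path $\gamma$ with prefix $\gamma_k$ of length $k$, I set
\begin{equation*}
Z_k \;:=\; \begin{cases} (B_{\gamma_k} q)_\alpha & \text{if } \gamma_k \in \Gamma_{\alpha}(\hat\pi), \\ 0 & \text{otherwise},\end{cases}
\end{equation*}
so $Z_0 = q_\alpha$ and the event in the statement is precisely $\{\sup_k Z_k > T q_\alpha\}$.

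The first step is to verify that $(Z_k)$ is a nonnegative martingale in the filtration $(\mathcal{F}_k)$ generated by the first $k$ arrows. On the absorbing set $\{\gamma_k \notin \Gamma_\alpha\}$ there is nothing to check. On $\{\gamma_k \in \Gamma_\alpha\}$ one does a two-case split on the state $(\hat\pi^{(k)}, q^{(k)})$. If $\alpha$ is not one of the two last letters $\alpha_0^{(k)}, \alpha_1^{(k)}$ of $\hat\pi^{(k)}$, then neither possible arrow touches the $\alpha$-coordinate and $Z_{k+1} = Z_k$ deterministically. Otherwise, say $\alpha = \alpha_0^{(k)}$ and $\beta = \alpha_1^{(k)}$; the arrow with $\alpha$ as winner and $\beta$ as loser has conditional probability $q_\beta^{(k)}/(q_\alpha^{(k)}+q_\beta^{(k)})$ and sends $Z_{k+1}$ to $0$, while the arrow with $\beta$ as winner and $\alpha$ as loser has conditional probability $q_\alpha^{(k)}/(q_\alpha^{(k)}+q_\beta^{(k)})$ and sends $Z_{k+1}$ to $q_\alpha^{(k)}+q_\beta^{(k)}$. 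The conditional expectation is therefore exactly $q_\alpha^{(k)}=Z_k$, establishing the martingale property and yielding $\mathbb{E}[Z_k]=q_\alpha$ for every $k$.

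Doob's maximal inequality applied to this nonnegative martingale then gives, for each $k \ge 0$,
\begin{equation*}
P_q\!\left(\max_{0 \le j \le k} Z_j > T q_\alpha \,\;\middle|\;\, \hat\pi\right) \;\le\; \frac{\mathbb{E}[Z_k]}{T q_\alpha} \;=\; \frac{1}{T},
\end{equation*}
and passing to the monotone limit $k \to \infty$ yields the uniform bound $1/T$ on the set $\{\gamma \in \Gamma_\alpha(\hat\pi) : (B_\gamma q)_\alpha > Tq_\alpha\}$. Since $n \ge 2$, this is strictly stronger than the announced bound $n/T$.

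The main delicate point is pure bookkeeping: one has to check that the formula $P_q(\gamma \mid \hat\pi) = N(q)/N(B_\gamma q)$ defines an honest probability (respectively sub-probability, upon entering the hole) measure on the path space, consistent with iterated one-step conditioning. This follows at once from the telescoping identity $N(B_\gamma q)/N(q) = \prod_i q_{\ell_i}^{(i+1)}/q_{\ell_i}^{(i)}$ taken over the losers $\ell_i$ of each arrow of $\gamma$, combined with the fact that $q_\beta/(q_\alpha + q_\beta) + q_\alpha/(q_\alpha + q_\beta) = 1$ at every non-hole state; once this is recorded, the computation above is routine.
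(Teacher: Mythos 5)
Your proof is correct, and it takes a genuinely different — and cleaner — route than the paper. The paper follows Kerckhoff's original strategy: it proves the bound for $\Gamma_\alpha^{(k)}(\hat\pi)$ by induction on $k$, with a case split at each step depending on whether $\alpha$ (or $-\alpha$) sits at the end of a row of $\hat\pi$; the factor $n$ in the announced bound comes from a crude counting estimate in the base case and is propagated through the induction via Equation~\eqref{Gamma|pi}. You instead observe that $Z_k = (B_{\gamma_k}q)_\alpha\,\mathbf 1[\gamma_k\in\Gamma_\alpha]$ is a nonnegative martingale for the path measure $P_q(\cdot\mid\hat\pi)$: the one-step verification is exactly the identity $\frac{q_\beta}{q_\alpha+q_\beta}\cdot 0 + \frac{q_\alpha}{q_\alpha+q_\beta}\cdot(q_\alpha+q_\beta)=q_\alpha$, and the absorbing behaviour at the hole and at the event ``$\alpha$ has won'' is handled correctly. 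Doob's maximal inequality then gives the bound $1/T$, which is strictly stronger than the stated $n/T$ since $n\ge 2$. The bookkeeping point you flag — that $N(q)/N(B_\gamma q)$ defines a consistent Markov-chain law — is indeed the only thing to check, and the telescoping argument you sketch is the standard one (it is implicit in the paper's identity $P_q(\Gamma|\gamma_1)=P_{B_{\gamma_1}q}(\Gamma^{\gamma_1}|\hat\pi_e)$, used later in Lemma~\ref{PL}). What the martingale viewpoint buys is conceptual clarity and a sharper constant; what the paper's inductive argument buys is that it is entirely elementary and stays closer to Kerckhoff's original combinatorial proof, which is what the authors wanted to cite and adapt. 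Both are valid, and your improved constant would in fact simplify the constants appearing downstream in Theorem~\ref{A2}.
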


 \begin{remark*}
A very close statement for IETs was proven in \cite{Ker}[Proposition 1.3]. In \cite{No}[Proposition 3.5] one can find a bit weaker version of the statement we are interested in.
\end{remark*}

\begin{proof}

We follow the strategy from \cite{AvGoYo}. 
Consider $\Gamma^{(k)}_{\alpha}(\hat \pi)$  the set of paths of length at most $k$
in $ \Gamma_{\alpha}(\hat \pi)$.
We will prove the inequality 
\begin{equation}\label{dd}
P_{q}\Big ( \big \{ \gamma \in \Gamma_{\alpha}^{(k)}(\hat\pi): (B_{\gamma}q)_{\alpha}>Tq_{\alpha} \big \} \big | \hat \pi \Big )< \frac{n}{T},
\end{equation}
by induction on $k$. The Theorem then follows immediately.

We start with the case $k = 1$. Consider a path $\gamma$ of length $1$, i.e.\ a simple arrow. 
Apriori there are two cases depending of the permutation $\hat \pi$.
The first case is if neither $\alpha$  nor $-\alpha$ are the last element of the either line of the permutation  $\hat \pi$, then 
$(B_\gamma q)_\alpha = q_\alpha$.
Thus  the set which we considered in Equation \eqref{dd} is  the empty set and thus the inequality
holds trivially.

On the other hand $\alpha$ (or $-\alpha$) can be the last element, but in this case it most be the loser since 
by the definition of $\Gamma_{\alpha}(\hat\pi)$ it could not be a winner. Denote the winner by $\beta$.
Then  $P_q(\gamma|\hat\pi)=\frac{q_\alpha}{q_{\alpha}+q_{\beta}}$. Thus
since $(B_\gamma q)_\alpha = q_\alpha + q_\beta$, the inequality  $(B_{\gamma}q)_{\alpha}>Tq_{\alpha}$ is equivalent to  $q_\alpha/(q_\alpha+q_\beta)< 1/T$. Let $\beta_i$ be the collection of all such
possible winners, and $\gamma_i$ denote the corresponding path of length 1, i.e.
$\Gamma_\alpha^{(1)}(\hat \pi) = \{ \gamma_1, \dots , \gamma_\ell\}$. Then since the paths are all disjoint
we have
\begin{align*}
P_{q}\Big ( \big \{ \gamma \in \Gamma_{\alpha}^{(1)}(\hat\pi): (B_{\gamma}q)_{\alpha}>Tq_{\alpha} \big \} \big | \hat \pi \Big ) & =  \sum_{(i: q_\alpha + q_{\beta_i} > T q_\alpha)} \frac{q_\alpha}{q_\alpha + q_{\beta_i}}\\
&<  \mathit{card} \big (i: q_\alpha + q_{\beta_i}  >T q_\alpha \big ) \cdot  \frac1T\\
& \le   \frac{n}{T}.
\end{align*}
This finishes the proof of the case $k=1$.

 In the same way, the case $k$ follows immediately from the case $k - 1$ when none of the rows of $\hat\pi$ end with $\alpha$ (or $-\alpha$).  If this is not the case then we can assume that the top row of $\hat\pi$ ends with $\alpha$ and the
bottom row with $\beta$. Then every path $\gamma\in \Gamma^{(n)}_{\alpha}(\hat\pi)$ starts with the bottom arrow
starting at $\hat \pi$ (i.e.\ $\beta$ wins because we are in $\Gamma_{\alpha}(\hat\pi)$).
Let us denote the path of length 1 consisting of  this arrow by $\gamma_s$ and let $q' = B_{\gamma_s}q$. We have $q'_\alpha=q_\alpha+q_\beta$ and $P_q(\gamma_s|\hat\pi) = \frac{q_{\alpha}}{q'_{\alpha}}$.
The inequality follows by the induction hypothesis  and Equation \eqref{Gamma|pi}.
\end{proof}
Now we apply the Kerckhoff lemma to obtain some more subtle estimations on the distortion.
The main idea is as follows: we want to define the first return time to the small subsimplex of the original parameter space; 
however, some (minimal) orbits that start in the fixed subsimplex will not go back since they stick somewhere close to the boundary; 
our purpose is to check that the probability of this event is in some sense low.
 
We mainly follow the strategy suggested in Appendix A of \cite{AvGoYo}.
Before we actually state the theorem, let us introduce some useful notation:
$$\EuScript{A'} \subset \EuScript{A};$$
$$m_{\EuScript{A'}}(q) = \min_{\alpha\in\EuScript{A'}}q_{\alpha};$$
$$m(q)=m_{\EuScript{A}}(q)$$
$$m_{k}(q) = \max_{\{\EuScript{A'} \subset \EuScript{A}: |\EuScript{A'}|=k\}}m_{\EuScript{A'}}(q);$$
$$M_{\EuScript{A'}}(q)=\max_{\alpha\in\EuScript{A'}}q_{\alpha};$$
$$M(q) = \max_{\alpha\in\EuScript{A}}q_{\alpha}.$$

The principal result of this section is the following 
\begin{theorem}\label{A2}
Let $\hat\pi\in\EuScript R$ be irreducible.
There exists $C>1$ such that for all $q\in \mathbb{R}^{\EuScript{A}}_{+}$ 
$$P_{q}\Big ( \big \{\gamma: M(B_{\gamma}q)<C \min\{m(B_{\gamma}q),M(q)\} \big \} | \hat\pi \Big )>C^{-1}.$$
\end{theorem}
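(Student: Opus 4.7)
The proof will follow the strategy of Appendix A of \cite{AvGoYo}, suitably adapted to the fIET setting. The target event $\{M(B_\gamma q) < C\min\{m(B_\gamma q), M(q)\}\}$ bundles a balance condition $M(B_\gamma q) < C\, m(B_\gamma q)$ with a growth condition $M(B_\gamma q) < C\, M(q)$; I will handle these separately using, respectively, positivity of first-return paths and a telescoped application of Lemma~\ref{Ker}.

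\emph{Step 1 (Balance via positivity).} The first-return paths $\gamma$ on the accelerated adjusted Rauzy graph are positive, by the lemma on positivity in Section~\ref{sec.acceleration}. Hence every entry of $B_\gamma$ is a positive integer, in particular $\geq 1$. It follows that, for every $\alpha\in\EuScript A$,
$$(B_\gamma q)_\alpha \;=\; \sum_{\beta\in\EuScript A}(B_\gamma)_{\alpha\beta}\, q_\beta \;\geq\; \sum_{\beta\in\EuScript A} q_\beta \;\geq\; M(q),$$
so $m(B_\gamma q)\geq M(q)$ holds automatically on the entire Markov partition. The balance condition $M(B_\gamma q)<C\, m(B_\gamma q)$ thus reduces to the growth bound $M(B_\gamma q)<C\, M(q)$ after possibly doubling $C$.

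\emph{Step 2 (Growth control via telescoped Kerckhoff).} I apply Lemma~\ref{Ker} to each letter $\alpha\in\EuScript A$ with a common threshold $T$ and sum over $\alpha$:
$$P_q\Bigl(\bigcup_{\alpha}\{\gamma\in\Gamma_\alpha(\hat\pi):(B_\gamma q)_\alpha>Tq_\alpha\}\Bigm|\hat\pi\Bigr) \;<\; n^2/T.$$
On the complement, every letter $\alpha$ that never wins along $\gamma$ satisfies $(B_\gamma q)_\alpha\leq T\, M(q)$. For letters that do win, I telescope: at the sub-path $\gamma_1$ where $\alpha$ wins for the first time, the new value is bounded by $(B_{\gamma_1} q)_\alpha \leq 2T\, M(q)$ (previous Kerckhoff bound for $\alpha$ plus Kerckhoff bound for the loser). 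I then reapply Lemma~\ref{Ker} to the continuation of $\gamma$ started from the configuration $(B_{\gamma_1}\hat\pi, B_{\gamma_1}q)$, using the conditional-probability decomposition \eqref{defPq} to split the estimate along $\gamma_1$. Since each letter can be a first-winner only once, at most $n$ such telescoping steps are required, yielding on an event of $P_q$-probability at least $(1-n^2/T)^n$ the uniform bound $M(B_\gamma q)\leq(2T)^n M(q)$. Taking $T=4n^2$ gives probability $\geq(3/4)^n$ and growth constant $C_0=(8n^2)^n$; combined with Step~1, any $C\geq\max\{2C_0,(4/3)^n\}$ works.

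\emph{Main obstacle.} The delicate technical point is the telescoping in Step~2: each reapplication of Lemma~\ref{Ker} must be compatible with the factorization \eqref{defPq} of the conditional probability, and the Kerckhoff bound must hold uniformly across all intermediate irreducible permutations encountered along $\gamma$. This is guaranteed by irreducibility being preserved along the Rauzy graph. A subtler issue is the treatment of letters that win multiple times, whose values compound at each successive winning event; the resolution, as in \cite{AvGoYo}, is to index the telescoping by \emph{first-winning} events only, relying on positivity of the neat sub-paths to ensure each letter wins at least once, and absorbing the effect of later wins into the exponent $n$ of the final bound.
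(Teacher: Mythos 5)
Your proposal takes a route that is genuinely different from the paper's. The paper does not restrict to positive paths; instead it proves, by induction on $k=|\EuScript A'|$, that one can control $m_k(B_\gamma q)$ (a maximum of minima over $k$-element subsets) together with $M(B_\gamma q)$, starting from the trivial case $k=1$ where $m_1=M$, and at each induction step enlarging the controlled subset $\EuScript A'$ by one letter via a carefully built chain of families $\Gamma\supset\Gamma_1, \Gamma_2,\Gamma_3,\Gamma_4$. Your idea is to kill the $\min$ entirely by observing that on positive (Markov-partition) paths one has $m(B_\gamma q)\ge\|q\|\ge M(q)$, reducing everything to a single growth bound $M(B_\gamma q)<CM(q)$, which you then try to obtain by telescoping Lemma~\ref{Ker}. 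This is an attractive shortcut in principle, but it transfers all the difficulty into the telescoping, and that is where the argument breaks down.

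The concrete gap is in Step~2. Your telescoping is indexed by \emph{first-winning} events, of which there are exactly $n$, and at the $i$-th such event you control $(B_\gamma q)_{\sigma(i)}$ at that moment. But this gives a bound only at the time of \emph{completeness} (every letter has won once). Completeness is strictly weaker than positivity: the paper's Section~\ref{sec.acceleration} only guarantees positivity after a concatenation of $2n-3$ complete paths, and your Step~1 requires positivity, not completeness, for the inequality $m(B_\gamma q)\ge M(q)$ that makes the $\min$ disappear. Between completeness and positivity (and indeed already between the first-win of $\sigma(i)$ and the end of the path), the component $(B_\gamma q)_{\sigma(i)}$ can continue to grow every time $\sigma(i)$ loses, and Lemma~\ref{Ker} as stated only controls the growth of a component \emph{before} that letter's first win --- it says nothing about subsequent segments. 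Your remark that later wins are ``absorbed into the exponent $n$'' is not an argument: for a first-return path the number of wins of a given letter is unbounded, so compounding is not controlled by any fixed exponent. To make a telescoping proof work you would have to re-condition at every return of the letter (not just at first wins), track the changing thresholds $q'_\alpha=(B_{\gamma_1}q)_\alpha$, and sum a geometric-type series --- this is essentially what the paper's induction on $k$ and its sequence $\Gamma_1,\dots,\Gamma_4$ achieve. Finally, note that $\{\gamma\in\Gamma_\alpha(\hat\pi)\}$ and the set of positive first-return paths are disjoint (a path on which $\alpha$ never wins cannot be complete), so the ``complement'' you pass to in Step~2 does not directly describe the Markov cells you restricted to in Step~1; the two steps are not yet speaking about the same family of paths.
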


\begin{proof}
The main idea of the proof comes from \cite{AvGoYo}: one should consider all the subsets $\EuScript{A'} \subset \EuScript{A}$ of fixed cardinality $k$ and prove that for each $1\le k\le n$ there exists $C >1$ (depending on $k$) such that
\begin{equation}\label{with_k}
P_{q}\Big ( \big \{\gamma: M(B_{\gamma}q)<C\min\{m_k(B_{\gamma}q),M(q)\} \big \} | \hat\pi \Big )>C^{-1}.
\end{equation}
The case $k=n$ implies the desired statement.
The proof is by induction on $k$. 

For $k=1$ we have $M(B_{\gamma}q)=m_1(B_{\gamma}q)\geq M(q)$ and so one has to estimate the probability of the following event $E_1=\{\gamma: M(B_{\gamma}q)<CM(q)\}$. Let  $E^c$ denote the complement of the event $E$; then $$E_{1}^c=\{\gamma: M(B_{\gamma}q)\ge CM(q)\}=\bigcap_{i}\{\gamma: M(B_{\gamma}q)\ge Cq_i\}, i=1,\cdots,n.$$ 
But $M(B_{\gamma}q)=(B_{\gamma}q)_{j}$ for some $j$, so $E_{1}^c\subset \cup_{j=1}^n E_{j}$ where $E_{j}=\{\gamma: (B_{\gamma}q)_{j}\ge Cq_j\}$.
Then Lemma \ref{Ker} implies that for any $C > 0$ we have $P_q(E_{1}^c)<\sum_{j} P_q(E_{j})<n^2C^{-1}$, and thus $P_q(E_{1})\ge 1-\frac{n}{C}>\frac{1}{C}$ for any $C>n^2+1$.

Now, we make the induction step. Let us assume that Equation \eqref{with_k} holds for some $1\le k<n$ with some constant $C_0$. We denote by $\Gamma$ the set of shortest possible paths $\gamma$ starting at $\hat \pi$ with $M(B_{\gamma}q)<C_0\min\{m_k(B_{\gamma}q), M(q)\}$ (by the length of the path we mean the number of steps of the non-accelerated induction). We will construct a family of paths such that the statement holds for all paths from this family for some subset $\EuScript{A''}$ of cardinality $k+1$ (it is enough because $m_k$ is a maximum taken over all subsets of fixed cardinality).

Recall that $$m_{k}(B_{\gamma}q) = \max_{\{\EuScript{A'} \subset \EuScript{A}: |\EuScript{A'}|=k\}}m_{\EuScript{A'}}(B_{\gamma}q)=m_{\EuScript{A'}}(B_{\gamma}q)$$ for some $\EuScript{A'}$ with cardinality $k$; so 
there exists a non-empty set of paths $\Gamma_1\subset \Gamma$ such that 
if $\gamma\in \Gamma_1$ then $m_k(B_{\gamma}q)=m_{\EuScript{A'}}(B_{\gamma}q)$. 
Since non-empty sets of finite paths have positive probability we can find $C_1 > 1$ such that $P_q(\Gamma_1|\hat \pi)>C_1^{-1}$.

For each $\gamma_s\in \Gamma_1$ consider all paths of  $\gamma :=\gamma_s\gamma_e$ with minimal length such that $\gamma$ ends at a permutation $\hat\pi_e$, such that the top or the bottom row of $\hat\pi_e$ (possibly both) ends with some element that does not belong to $\EuScript{A'}\cup-\EuScript{A'}$. 
Let $\Gamma_2$ be the collection of paths $\gamma$ obtained in this way. So, there exists $\tilde C_2>0$ such that $P_q(\Gamma_2|\hat \pi)>\tilde C_2^{-1}$. 
By the induction assumption and definition of $\Gamma$ 
\begin{equation}\label{distortion}
M(B_{\gamma_s}q)\le C_0 \min({m_k(B_{\gamma_s}q), M(q)}),
\end{equation}
Since induction does not  decrease the norm, this implies that 
 all components of the vector $B_{\gamma_s}q$ belong to the interval $[m(q), C_{0}M(q)]$.
We work only with paths of minimal length, thus the winner must change on each step. This implies that 
the total length of  $\gamma_e$ can not exceed $2|\EuScript{A'}|\le 2k$, and that on each step the loser is different. 
Each step in the induction corresponds to the following action on matrix of the cocycle: some component of the vector $B_{\gamma_s}q$ is added to another one. The fact that the loser is always different means that we always add different components to each other. On each step the norm of the loser component  is added to the norm of the winner component, so every time we add not more than $C_{0}M(q)$, due to \eqref{distortion}. 
The total number  of steps is bounded by $2n$. So, there exists a constant $C'_2 = (n+1)C_{0}M(q)>0$ such that  $$M(B_{\gamma}q)<C'_{2}M(B_{\gamma_s}q).$$

Then, for $C_2=\max(\tilde C_2, C'_2)$ we have $P_q(\Gamma_2|\pi)>C_2^{-1}$  and for $\gamma\in\Gamma_2$ $$M(B_{\gamma}q)<C_{2}M(B_{\gamma_s}q).$$

Let $\Gamma_3$ be a set of paths $\gamma := \gamma_s\gamma_e$, where $\gamma_s\in \Gamma_2, (B_{\gamma}q)_\alpha\le 2n^2 \cdot (B_{\gamma_s}q)_\alpha$ for all $\alpha\in \EuScript{A'}$ and also several combinatorial conditions hold: 
\begin{itemize}
\item the winner of the last arrow of $\gamma_e$ belongs either to $\EuScript{A'}$ or to $-\EuScript{A'}$;
\item the winners of other arrows does not belong to $\EuScript{A'}$ and to $-\EuScript{A'}$.
\end{itemize}

This set of paths is essentially the set described in Lemma \ref{Ker} (the winners of arrows should not belong to $\EuScript{A'}$) with exactly one difference: one has to consider a conditional probability $P_q(\Gamma_3|\gamma_s)$ with respect to $\gamma_s$, not some fixed permutation $\hat \pi$. More precisely, for fixed $\gamma_s$ let $\Gamma_3^{\gamma_s} := \{\gamma_e: \gamma_s \gamma_e \in \Gamma_3  \}$. 
Suppose that $\gamma_s$ ends at the permutation $\hat \pi$, then $P_q(\Gamma_3|\gamma_s)=P_{B_{\gamma_s}q}(\Gamma^{\gamma_s}|\hat \pi)$.
So, we apply Lemma \ref{Ker} with $T=\frac{1}{2n}$ and see that $P_q((\Gamma_3|\gamma_s)^{c})\le\frac{1}{2n}$ and so 
$P_q(\Gamma_3|\gamma_s)\ge 1-\frac{1}{2n}=\frac{2n-1}{2n}>\frac{1}{2}$.

This implies that $P_q(\Gamma_3|\hat \pi)=P_q(\Gamma_3|\gamma_s)P_q(\gamma_s|\hat \pi)>(2C_2)^{-1}.$

Let $\gamma :=\gamma_s\gamma_e\in \Gamma_3$. 
If $M(B_{\gamma}q)>2n \cdot M(B_{\gamma_s}q)$ we consider  $\gamma' = \gamma_s \gamma_e$ 
the minimal length prefix  of $\gamma$  for which the same inequality holds: $M(B_{\gamma'}q)>2n \cdot M(B_{\gamma_s}q)$. Then, there exists $\alpha \not \in \pm \EuScript{A'}$ such that $M(B_{\gamma'}q)=(B_{\gamma'}q)_{\alpha}\le 4n \cdot M(B_{\gamma_s}q)$ (it follows from the fact that $\gamma_1$ is of minimal length and that the $Mq$ can at most double after one step of the non-accelerated Rauzy induction).
Together with the assumption that the statement holds for $k$ it implies that $$m_{\EuScript{A'}}(B_{\gamma_1}q)>(C_{0}C_{2}4n)^{-1}M(B_{\gamma_1}q).$$

If $M(B_{\gamma}q)\le 2nM(B_{\gamma_s}q)$ the loser $\alpha$ of the last arrow of $\gamma$ satisfies the following inequality:
$$(B_{\gamma_1}q)_{\alpha}\ge (C_{0}C_{2}4n)^{-1}M(B_{\gamma_1}q)$$ (by the same calculation as above). 
Our $\alpha$ did not belong to $\EuScript{A'}$.

In any case, we construct the family $\Gamma_4$ (it contains $\gamma'$ presented above) and $\EuScript {A'}$ of cardinality $k+1$ for which the Inequality (\ref{with_k}) holds. 
\end{proof}

Now we prove some more subtle distortion estimate.

\begin{theorem} \label{thm:proba-estimate}
For every $\hat{\gamma}\in\Pi(\EuScript{R})$ there exist $\delta>0, C>0$ such that for every $\hat \pi\in\EuScript{R}$, $q\in \mathbb R^{\EuScript{A}}_{+}$ and for every $T>1$

$P_{q}\big (\gamma$ cannot be written as $\gamma_{s}\hat{\gamma}\gamma_{e}$ and $M(B_{\gamma}q )>TM(q)| \hat \pi  \big )\le CT^{-\delta}.$
\end{theorem}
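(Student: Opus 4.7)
The proof follows Appendix A of \cite{AvGoYo}. The strategy combines Theorem \ref{A2} with a combinatorial catching lemma for $\hat{\gamma}$.

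\emph{Combinatorial lemma.} With $C_0$ denoting the constant from Theorem \ref{A2}, there exist $L>0$ and $p_0>0$ such that for every $\hat\pi'\in\EuScript{R}$ and every $q'$ with $M(q')\le C_0\,m(q')$, the conditional probability that the next $L$ Rauzy arrows contain $\hat{\gamma}$ as a subpath is at least $p_0$. To see this, note that the finite connected graph $\EuScript{R}$ has finite diameter $L_0$, so we can take $L=L_0+|\hat{\gamma}|$. When $q'$ is $C_0$-balanced, each single-step conditional probability $q''_\beta/(q''_\alpha+q''_\beta)\ge(1+M(q'')/m(q''))^{-1}$, and since the ratio $M/m$ can at most double per Rauzy step, the specific length-$L$ combinatorial path from $\hat\pi'$ through $\hat{\gamma}$ has conditional probability at least $(1+2^L C_0)^{-L}=:p_0$.

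\emph{Absence-of-balance estimate.} Let $g(T,q,\hat\pi)$ denote the $P_q(\,\cdot\mid\hat\pi)$-probability of $\{\gamma:\text{no prefix of }\gamma\text{ satisfies the balance condition in Theorem \ref{A2}, and }M(B_\gamma q)>TM(q)\}$. Since each cocycle matrix $B$ has ones on the diagonal and non-negative off-diagonal entries, $M(B_{[0,t]}q)$ is non-decreasing in $t$, so the $M$-threshold times $\tau_j:=\inf\{t:M(B_{[0,t]}q)\ge C_0^j M(q)\}$ are well-defined on the event. At $\tau_j$ the reached state has $M\in[C_0^j M(q),2C_0^j M(q)]$, and Theorem \ref{A2} applied to this state yields conditional probability $>C_0^{-1}$ that a balanced prefix appears before $\tau_{j+1}$. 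Iterating across the roughly $\log T/\log C_0$ thresholds via the Markov factorization $P_q(\gamma_s\gamma_e\mid\hat\pi)=P_q(\gamma_s\mid\hat\pi)\,P_{B_{\gamma_s}q}(\gamma_e\mid\hat\pi')$ yields $g(T,q,\hat\pi)\le(1-C_0^{-1})^{\lfloor\log T/\log C_0\rfloor}\le C_2\,T^{-\delta_1}$ with $\delta_1=-\log(1-C_0^{-1})/\log C_0>0$.

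\emph{Recursive conclusion.} Let $f(T):=\sup_{q,\hat\pi}P_q(E_T\mid\hat\pi)$ where $E_T$ is the event in the theorem. Decompose $E_T$ according to whether $\gamma$ admits a balanced prefix: the ``no balanced prefix'' piece is bounded by $g(T,q,\hat\pi)\le C_2\,T^{-\delta_1}$. On the complement, let $\eta^{(1)}$ be the shortest balanced prefix (so $M(B_{\eta^{(1)}}q)<C_0 M(q)$); from the balanced state reached, the combinatorial lemma gives conditional catching probability at least $p_0$ in the next $L$ steps, and catching $\hat\gamma$ excludes the path from $E_T$. Failing to catch (conditional probability $\le 1-p_0$) leaves us at a state with $M\le 2^L C_0 M(q)=:C_1 M(q)$ from which the rest of $\gamma$ must satisfy an $E_{T/C_1}$-event. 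The Markov factorization gives
$$f(T)\le C_2\,T^{-\delta_1}+(1-p_0)\,f(T/C_1),$$
and iterating yields $f(T)\le CT^{-\delta}$ with $\delta=\min\bigl(\delta_1,\,-\log(1-p_0)/\log C_1\bigr)>0$.

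The main subtlety will lie in the absence-of-balance estimate: rigorously justifying via the Markov structure of $P_q$ that the conditional probabilities at successive $M$-thresholds multiply, and handling the fact that $M$ can jump past a threshold in a single Rauzy step (forcing one to apply Theorem \ref{A2} at a state with $M$ only comparable to, not equal to, $C_0^j M(q)$).
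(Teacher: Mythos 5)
Your proof is essentially correct and reaches the same conclusion, but it takes a genuinely different organizational route from the paper's. The paper proceeds via two intermediate lemmas: Lemma \ref{condproba}, which is a single application of Theorem \ref{A2} giving
$P_{q}\big(M(B_{\gamma}q)>C'M(q)\text{ and }m(B_{\gamma}q)<M(q)\mid\hat\pi\big)<1-\tfrac{1}{C'}$,
and Lemma \ref{PL}, a single-epoch estimate asserting that with probability at most some $\rho<1$, the path avoids $\hat\gamma$ while $M$ grows by $2^{M}$; the theorem then follows by iterating Lemma \ref{PL} exactly $k\approx\log T/((M+1)\log 2)$ times using the disjointness of the shortest-prefix families $\Gamma_{(i)}$. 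The crucial economy in the paper's route is that Lemma \ref{condproba} extracts, from a single use of Theorem \ref{A2}, a \emph{weak} balance: on the complementary event one has both $m(B_{\gamma_{1}}q)\ge M(q)$ and $M(B_{\gamma_{1}}q)\le 2\cdot 2^{M_{0}}M(q)$, so the ratio $M/m$ at the threshold state is bounded by $2^{M_{0}+1}$ and the catching estimate can be applied there directly. You instead insist on achieving the strong $C_{0}$-balance of Theorem \ref{A2} before catching, and pay for it with a separate ``absence-of-balance'' power-law estimate obtained by iterating Theorem \ref{A2} across $M$-thresholds. This makes your argument a two-layer iteration (one to reach balance, one for the recursion in $T$) where the paper's is a one-layer iteration, but the logic is sound: $M$ is monotone (the cocycle matrices have unit diagonal and non-negative entries), the shortest-balanced-prefix family is disjoint, and the Markov factorization $P_q(\gamma_s\gamma_e\mid\hat\pi)=P_q(\gamma_s\mid\hat\pi)P_{B_{\gamma_s}q}(\gamma_e\mid\hat\pi')$ is exactly the property being used in both proofs.

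Three minor points you should tighten. First, the thresholds in the absence-of-balance estimate must be spaced by $2C_{0}$ rather than $C_{0}$: at $\tau_{j}$ the state has $M(q_{(j)})\le 2(2C_{0})^{j}M(q)$ (the path can overshoot by a factor $\le 2$), so the balanced prefix guaranteed by Theorem \ref{A2} satisfies $M<C_{0}M(q_{(j)})\le(2C_{0})^{j+1}M(q)$ and hence, by monotonicity of $M$, occurs strictly before $\tau_{j+1}$ only if $\tau_{j+1}$ is set at $(2C_{0})^{j+1}M(q)$. This also shifts $\delta_{1}$ to $-\log(1-C_{0}^{-1})/\log(2C_{0})$. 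Second, in the recursion $f(T)\le C_{2}T^{-\delta_{1}}+(1-p_{0})f(T/C_{1})$, the claimed exponent $\delta=\min(\delta_{1},-\log(1-p_{0})/\log C_{1})$ is borderline: the induction needs $(1-p_{0})C_{1}^{\delta}<1$ \emph{strictly} so that $1-(1-p_{0})C_{1}^{\delta}>0$, hence take $\delta$ strictly smaller than $-\log(1-p_{0})/\log C_{1}$ (e.g.\ half of it). Third, in the combinatorial lemma one must check that from any $\hat\pi'$ there is an oriented path reaching the initial vertex of $\hat\gamma$ without passing through the hole; this is exactly the connectedness lemma for the adjusted Rauzy graph stated in Section \ref{Rauzy}, and is worth citing explicitly since the Rauzy graph for fIETs is not automatically strongly connected.
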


\begin{remark*}
The restriction on the paths means that we only consider paths that do not contain $\hat{\gamma}$ as a proper part.
\end{remark*}

The most important point of the argument we use is that the estimates that we prove in Theorem \ref{thm:proba-estimate} are uniform with respect to $q$.

The proof of the theorem is based on the following two lemmas that can be considered as the corollaries of Theorem \ref{A2}.

\begin{lemma} \label{condproba}
There exists $C'>1$ such that for any permutation $\hat \pi$
$$P_{q} \big ( \{ \gamma : 
M(B_{\gamma}q)>C'M(q), m(B_{\gamma}q)<M(q)  \} |\hat \pi \big )<1-\frac{1}{C'}.$$
\end{lemma}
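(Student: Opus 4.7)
The plan is to derive Lemma \ref{condproba} as an immediate consequence of Theorem \ref{A2}. Set
$$A := \{\gamma : M(B_\gamma q) > C' M(q)\}, \qquad B := \{\gamma : m(B_\gamma q) < M(q)\}.$$
The event whose probability we want to bound is $A \cap B \subseteq A$, so it suffices to prove $P_q(A \mid \hat\pi) < 1 - (C')^{-1}$ for a suitable $C'>1$. In other words, the condition involving $m$ will not be needed at all.

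Theorem \ref{A2} produces a universal constant $C > 1$, independent of $\hat\pi$ and $q$, such that the event
$$G := \{\gamma : M(B_\gamma q) < C \min(m(B_\gamma q), M(q))\}$$
satisfies $P_q(G \mid \hat\pi) > C^{-1}$. Every $\gamma \in G$ satisfies in particular $M(B_\gamma q) < C M(q)$. Therefore, choosing any $C' > C$ (for concreteness take $C' := C+1$), we obtain $G \subseteq A^c$, whence
$$P_q(A \mid \hat\pi) \;\le\; 1 - P_q(G \mid \hat\pi) \;<\; 1 - C^{-1} \;<\; 1 - (C')^{-1},$$
and a fortiori the same bound holds for the smaller event $A \cap B$ appearing in the lemma.

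The condition $m(B_\gamma q) < M(q)$ in the statement plays no logical role in this deduction; it is included presumably because it is the relevant hypothesis when the lemma is combined with other estimates in the proof of Theorem \ref{thm:proba-estimate} that follows. I do not anticipate any serious obstacle here: the entire substance of the statement is already packaged in Theorem \ref{A2}, and the argument amounts to throwing away the $m$-control and noting that $G \subseteq \{M(B_\gamma q) \le CM(q)\}$.
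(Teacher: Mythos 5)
Your proof is correct and takes essentially the same approach as the paper: both deduce the lemma from Theorem~\ref{A2} by showing the target event lies inside the complement of the good set $G$. The paper decomposes $G$ into the two cases $m(B_\gamma q)<M(q)$ and $m(B_\gamma q)\ge M(q)$ before arguing about the complement, whereas you simply observe $G\subseteq\{M(B_\gamma q)<CM(q)\}$ and drop the (indeed logically superfluous) condition $m(B_\gamma q)<M(q)$ --- a slightly cleaner route to the same bound.
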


\begin{proof}
From Theorem \ref{A2} we know the lower bound of the probability of the following event:
\begin{equation}\label{C}\nonumber
P_q (X \cup Y| \hat \pi )>\frac{1}{C},
\end{equation}
where $X := X_{1} \cap X_{2} $ is defined by
$$
\begin{aligned} 
X_{1} & : = \{ \gamma: M(B_{\gamma}q)<Cm(B_{\gamma}q) \}\\ 
X_{2} & := \{ \gamma: m(B_{\gamma}q) < M(q)\} \\ 
\end{aligned} 
$$
and $Y := Y_{1} \cap Y_{2} $ is given by
$$
\begin{aligned} 
Y_{1} & := \{ \gamma : M(B_{\gamma}q) < CM(q)\}\\ 
Y_{2} & := \{ \gamma: M(q) \le m(B_{\gamma}q)\}. \\ 
\end{aligned} 
$$
Suppose $x \in X_{1}^c \cap X_{2}$, then $x \not \in X_{1}$ and thus $x \not \in X$.
Furthermore $x \not \in Y_{2}$ since $X_{2} \cap Y_{2} = \emptyset$, and thus $x \not \in Y$.
Thus
$$P_q(X_{1}^c \cap X_{2}| \hat \pi) < 1 - \frac1C.$$
Then the lemma follows for any $C' > C$ since
$$ \{\gamma : M(B_{\gamma}q)>C'M(q), m(B_{\gamma}q)<M(q)  \}  \subset  X_{1}^c \cap X_{2}.$$
\end{proof}

We write $\hat \gamma \sqsubset \gamma$ if there exist
non-empty $\gamma_s$ and $\gamma_e$ such that
 $\gamma = \gamma_s \hat \gamma \gamma_e$, otherwise we write $\hat \gamma \not \sqsubset \gamma$.
The next lemma follows from the previous one and is also important for the proof of Theorem \ref{thm:proba-estimate}.
\begin{lemma}\label{PL}
For any $\hat{\gamma}\in \Pi(\EuScript{R})$ there exist $M\ge 0, \rho<1$ such that for any $\hat \pi\in \EuScript{R}, q\in \mathbb{R}^{\EuScript{A}}_{+}$ 

$P_{q}(\gamma: \hat \gamma \not \sqsubset \gamma$ and $M(B_{\gamma}q)>2^{M}M(q)|\hat \pi) \le \rho.$
\end{lemma}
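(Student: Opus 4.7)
The plan is to combine Theorem \ref{A2} with a uniform lower bound on the probability of inserting $\hat\gamma$ as a proper factor once the Rauzy orbit reaches a fully balanced state. First, for each $\hat\pi' \in \EuScript{R}$, fix a non-trivial path $\gamma^{(0)}_{\hat\pi'}$ in the (adjusted) Rauzy graph from $\hat\pi'$ to the starting vertex of $\hat\gamma$, and fix an arrow $e$ beginning at the endpoint of $\hat\gamma$; set $\eta_{\hat\pi'} := \gamma^{(0)}_{\hat\pi'}\hat\gamma e$. Let $C$ be the constant from Theorem \ref{A2}, and let $K>0$ be a bound on $\prod_\alpha\sum_\beta (B_{\eta_{\hat\pi'}})_{\alpha\beta}$ over the finitely many choices of $\hat\pi'$. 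Choose $M$ large enough that $CK < 2^M$.

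For each infinite path $\omega$ starting at $\hat\pi$, let $\tau_B(\omega)$ be the first prefix $\gamma_B\le\omega$ satisfying the balance condition $M(B_{\gamma_B}q) < C\min\{m(B_{\gamma_B}q), M(q)\}$, and set $B := \{\tau_B<\infty\}$. Theorem \ref{A2} gives $P_q(B|\hat\pi)\ge 1/C$. Let $A$ be the event of the lemma: the set of $\omega$ possessing a minimal prefix $\gamma$ with $\hat\gamma\not\sqsubset\gamma$ and $M(B_{\gamma}q)>2^M M(q)$. The key claim is that on $A\cap B$, denoting $\hat\pi^B$ the vertex at $\tau_B$ and $q^B := B_{\gamma_B}q$, the post-$\tau_B$ continuation of $\omega$ cannot begin with $\eta_{\hat\pi^B}$.

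Indeed, suppose it did. The resulting prefix $\gamma_B\eta_{\hat\pi^B}$ of $\omega$ satisfies $M(B_{\gamma_B\eta_{\hat\pi^B}}q) \le K\,M(q^B) < CK\,M(q) < 2^M M(q)$, so it has not yet reached the growth threshold demanded by $A$. But $\hat\gamma$ sits inside $\gamma_B\eta_{\hat\pi^B}$ as a proper factor, non-trivially flanked by $\gamma_B\gamma^{(0)}_{\hat\pi^B}$ on the left and by $e$ on the right, and this property persists under any further extension---contradicting the existence of a prefix $\gamma$ of $\omega$ with both $M(B_{\gamma}q)>2^M M(q)$ and $\hat\gamma\not\sqsubset\gamma$. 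By the Markov property together with the balance $M(q^B)\le C\,m(q^B)$ one has
$$P_{q^B}(\eta_{\hat\pi^B}|\hat\pi^B) = \frac{N(q^B)}{N(B_{\eta_{\hat\pi^B}}q^B)} \ge \frac{m(q^B)^n}{K\, M(q^B)^n} \ge \frac{1}{K\,C^n} =: c,$$
uniformly in $q$ and in the finitely many $\hat\pi^B\in\EuScript{R}$.

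Consequently $P_q(A\mid \tau_B=\gamma_B) \le 1-c$ for every $\gamma_B$ realizing the balance, whence $P_q(A\cap B|\hat\pi) \le (1-c)\,P_q(B|\hat\pi)$. Combined with the trivial estimate $P_q(A\cap B^c|\hat\pi) \le P_q(B^c|\hat\pi)$, this yields
$$P_q(A|\hat\pi) \le 1 - c\,P_q(B|\hat\pi) \le 1 - \frac{c}{C},$$
so that $\rho := 1 - c/C < 1$ works. The main delicate point is calibrating $M$ so that the controlled insertion of $\hat\gamma$ sits strictly below the growth threshold $2^M M(q)$ while indeed producing $\hat\gamma$ as a \emph{proper} factor (hence the insistence on $\gamma^{(0)}_{\hat\pi'}$ being non-trivial); both requirements depend only on the combinatorial data of $\hat\gamma$ and the Rauzy class, which yields the needed uniformity in $q$ and $\hat\pi$.
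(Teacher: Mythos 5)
Your proof is correct, and it takes a route that is genuinely somewhat different from the paper's, though in the same spirit. The paper splits each offending path $\gamma$ at the first prefix $\gamma_1$ at which $M(B_{\gamma_1}q)$ exceeds an intermediate threshold $2^{M_0}M(q)$ (with $M=2M_0$), then appeals to Lemma~\ref{condproba} to argue that with probability at least a fixed fraction the vector is also balanced from below at that moment ($m(B_{\gamma_1}q)\ge M(q)$), and only then computes the probability of inserting $\hat\gamma$; this forces the two-level threshold and the explicit bound $2^{-3nM_0}$. You instead stop at the first \emph{balanced} state in the sense of Theorem~\ref{A2} directly, and calibrate $2^M > CK$ against the (uniformly bounded) cost $K$ of appending the canonical path $\eta_{\hat\pi'}=\gamma^{(0)}_{\hat\pi'}\hat\gamma e$. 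This yields a cleaner stopping rule and avoids the intermediate lemma, at the cost of a slightly more delicate verification that the two events (reaching the $2^M$ threshold without ever containing $\hat\gamma$ properly, versus the balanced stopping time) interact correctly; you handle this by observing that monotonicity of $M(B_\cdot q)$ along prefixes forces the balanced time to occur strictly before the minimal threshold-crossing prefix, and that $\gamma_B\eta_{\hat\pi^B}$ can be neither a prefix of it (too small in $M$) nor a proper subword of it (that would force $\hat\gamma\sqsubset\gamma$). The one place where care is needed, and you flag it yourself, is insisting that $\gamma^{(0)}_{\hat\pi'}$ be non-trivial and padding by an extra arrow $e$ on the right, so that $\hat\gamma$ really appears as a \emph{proper} factor; this matches the paper's use of $\gamma_{\hat\pi_e}=\gamma_s\hat\gamma$ padded inside the bigger path. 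Both proofs ultimately land at a bound of the form $\rho=1-c/C$ with $c$ depending only on $\hat\gamma$ and the Rauzy class, uniformly in $q$ and $\hat\pi$, so the conclusions agree.
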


\begin{proof}
Fix $M_{0}$ large enough (we will choose it precisely later) and let $M :=2M_{0}$. We consider the set 
$$\Gamma := \{\gamma: \gamma \text { of minimal length satisfying } \hat \gamma \not \sqsubset \gamma \text{ and } M(B_{\gamma}q)>2^{M}M(q)\}.$$

As  mentioned above, $M(B_{\gamma}q)$ can not increase more than twice for a path of the length one.
So any path in $\Gamma$ can be written as $\gamma=\gamma_{1}\gamma_{2}$ where $\gamma_{1}$ is the shortest path such that 
$$M(B_{\gamma_{1}}q)>2^{M_{0}}M(q),$$
and neither  $\gamma_1$ nor $\gamma_2$ coincide with $\gamma$.

Let us denote the set of such $\gamma_{1}$ by $\Gamma_{1}$.
It  follows directly from minimality that $\Gamma_{1}$ is disjoint in terms of \cite{AvGoYo} which means that any path is not a part of some other path from the same set. 

Now  consider the subset $\tilde \Gamma_{1}$ of  $\Gamma_{1}$ consisting of all $\gamma_{1}$ such that 
$m(B_{\gamma_{1}}q)\ge M(q)$ (or, equivalently, $M_{\EuScript{A'}}(B_{\gamma_{1}}q)\ge M(q),$ for all non-empty $\EuScript{A}'$).

By Lemma \ref{condproba} choosing $M_{0}$ large enough we have that 
$$P_{q}(\Gamma_{1}\setminus \tilde\Gamma_{1}|\hat \pi)<1-\frac{1}{C}$$
with some constant $C>1.$

Now we use the strategy from \cite{AvRe}.  We fix some permutation $\hat\pi_{e}$ and consider the shortest path $\gamma_{\hat\pi_{e}}$ starting at $\hat\pi_{e}$ and containing $\hat{\gamma}$ (if there are several such paths choose one). We define $\gamma_s$ by  $\gamma_{\hat \pi_{e}}=\gamma_{s}\hat{\gamma}$.
Then, if $M_{0}$ is large enough, we can assume that 
\begin{equation}
|B_{\gamma_{\hat \pi_{e}}}|<2^{M_{0}-1}.
\label{B}
\end{equation}
If $\hat \pi_{e}$ is the end of some $\gamma_{1}\in\Gamma_{1}$, then $P_q(\Gamma|\gamma_1)=P_{B_{\gamma_1}q}(\Gamma^{\gamma_1}|\hat \pi_e),$
where $\Gamma^{\gamma_1} := \{\gamma_e: \gamma = \gamma_1 \gamma_e \in \Gamma\}.$ 
So, since $\gamma$ does not contain $\hat\gamma$ as a proper part it follows that
\begin{equation}\label{proper}
P_{q}(\Gamma|\gamma_{1})\le 1 - P_{B_{\gamma_{1}}q}(\gamma_{\hat \pi_{e}}|\hat \pi_{e})
\end{equation}
because $\Gamma^{\gamma_1}$ and the set of $\gamma_{\hat\pi_e}$ do not intersect and together they fill not more than the set of possible continuations of $\gamma_1$. 

If $\gamma_{1}\in \tilde\Gamma_{1}$, $P_{q}(\Gamma|\gamma_{1})$ can be estimated directly in terms of the measures of subsimplices of the original simplex: if $N(q) = q_{1}\cdots q_{n},$ then
$$P_{B_{\gamma_{1}}q}(\gamma_{\hat \pi_{e}}|\hat \pi_{e}) = \frac {N(B_{\gamma_{1}}q)}{N(B_{\gamma_{\hat \pi_e}}B_{{\gamma_1}}q)}.$$
We need to make several estimates, first all by the definition of  $\tilde \Gamma_1$  we have $m(B_{\gamma_1}q) \ge M(q)$ and thus
$N(B_{\gamma_{1}}q)\ge (M(q))^n$.
Next Inequality (\ref{B}) implies that
$$N(B_{\gamma_{\hat \pi_e}}B_{{\gamma_1}}q)< (2^{M_{0}-1}2^{M}M(q))^{n},$$
because it follows from the definition of $\Gamma_1$ that $M(B_{\gamma_1}q)<2^{M}M_{q}.$
So, it is easy to see now that 
\begin{equation}\label{low}
P_{B_{\gamma_{1}}q}(\gamma_{\hat \pi_{e}}|\hat \pi_{e})\ge 2^{-3nM_{0}}.
\end{equation}

We start with the case $P_{q}(\tilde\Gamma_{1}|\hat\pi)\ge \frac{1}{2C}$.
Starting with the definition of $P_q$ (Equation \eqref{defPq}) we have
$$\begin{aligned}
P_{q}(\Gamma|\hat\pi)&=\sum_{\gamma_1\in\Gamma_1}P_q(\Gamma|\gamma_1)P_q(\gamma_1|\hat\pi)\\&=\sum_{\gamma_1\in\tilde\Gamma_1}P_q(\Gamma|\gamma_1)P_q(\gamma_1|\hat\pi)+\sum_{\gamma_1\in\Gamma_1\setminus\tilde\Gamma_1}P_q(\Gamma|\gamma_1)P_q(\gamma_1|\hat\pi)\\
 &\le \big  (\!\! \sup_{\gamma_1\in\tilde\Gamma_1}  P_q(\Gamma|\gamma_1) \big )\cdot \sum_{\gamma_1 \in  \tilde \Gamma_1}  P_q(\gamma_1|\hat\pi)+\big (\!\!\!\!\!\!  \sup_{\gamma_1\in\Gamma_1\setminus\tilde\Gamma_1} \!\!\!\! P_q(\Gamma|\gamma_1) \big ) \cdot \!\!\!\!\!\! \sum_{\gamma_1 \in  \tilde \Gamma_1 \setminus \tilde \Gamma_1}\!\!\!\! P_q(\gamma_1|\hat\pi)\\
 & = \big (\!\! \sup_{\gamma_1\in\tilde\Gamma_1}P_q(\Gamma|\gamma_1) \big )  \cdot P_q(\tilde\Gamma_1|\hat\pi)+\big ( \!\!\!\!\!\! \sup_{\gamma_1\in\Gamma_1\setminus\tilde\Gamma_1}\!\!\!\! P_q(\Gamma|\gamma_1) \big ) \cdot P_q(\Gamma_1\setminus\tilde\Gamma_1|\hat\pi). \\
\end{aligned}$$
In the last line we used the fact that $\Gamma_1$  is disjoint.

Inequalities \eqref{proper}, \eqref{low} imply that $ \sup_{\gamma_1\in\tilde\Gamma_1}P_q(\Gamma|\gamma_1)  < 1 - 2^{-3nM_0}$, using this 
and the facts that $\sup_{\gamma_1\in\Gamma_1\setminus\tilde\Gamma_1} P_q(\Gamma|\gamma_1)  \le 1$ and
$P_q(\Gamma_1\setminus\tilde\Gamma_1|\hat\pi) \le 1 - P_q(\tilde \Gamma_1 | \hat \pi)$ along with the assumption $P_{q}(\tilde\Gamma_{1}|\hat\pi)\ge \frac{1}{2C}$ yields
$$\begin{aligned}
P_{q}(\Gamma|\hat\pi) & \le P_{q}(\tilde\Gamma_{1}|\hat\pi)\cdot(1-2^{-3nM_{0}})+1-P_{q}(\tilde\Gamma_{1}|\hat\pi)\\
& =1-P_{q}(\tilde\Gamma_{1})\cdot2^{-3nM_{0}}\\
& \le 1-\frac{2^{-3nM_{0}}}{2C}.
\end{aligned}$$

Now consider the case $P_{q}(\tilde\Gamma_{1}|\hat\pi)<\frac{1}{2C},$
then by Inequality \eqref{Gamma|pi} $P_{q}(\Gamma_{1}|\hat\pi)<1-\frac{1}{C}+\frac{1}{2C}=1-\frac{1}{2C}.$
So, Lemma \ref{PL} holds with $\rho = 1 - \frac{2^{-3nM_{0}}}{2C}$.
\end{proof}

Now we turn to the proof of the theorem.

\begin{proofof}{Theorem \ref{thm:proba-estimate}}
Let $M$ and $\rho$ be as in the Lemma \ref{PL}. For a given $T>1$ let $k$ be the maximal integer such that such that $T\ge 2^{k(M+1)}$. Let   $\gamma$ be the shortest path that does not include $\hat{\gamma}$ as a proper part and such that $M(B_{\gamma}q)>2^{k(M+1)}M(q)$. Then $\gamma$ can be written in the following way: $\gamma=\gamma_{1}\cdots\gamma_{i}\cdots\gamma_{k},$ where for each $i$ $\gamma_{(i)}=\gamma_1\cdots\gamma_i$ is the shortest path such that 
\begin{equation}\label{path}
M(B_{\gamma_{i}}q)>2^{i(M+1)}M(q).
\end{equation}
Then all such $\gamma_{(i)}$ comprise a set $\Gamma_{(i)}$ for each $i$, and these sets are disjoint because each path has to be the shortest one satisfying (\ref{path}). 
Now, Lemma \ref{PL} and (\ref{path}) imply that for all $\gamma_{(i)}\in \Gamma_{(i)}$
$$P_{q}(\Gamma_{(i+1)}|\gamma_{(i)})\le \rho.$$
So $P_{q}(\Gamma|\hat \pi)<{\rho}^{k}$. The result follows from the definition of $k$.
\end{proofof}

\section{The roof function}\label{Roof}
\subsection{Definition}
The construction of the roof function that we present in this section is based on the idea of renormalization provided by Veech in \cite{Ve2}.
Fix some positive complete path $\gamma_{*}$ starting and ending at the same permutation $\hat \pi$, and the subsimplex of the parameter space that corresponds to this path $\Delta_{\gamma_{*}}$. 
We are interested in the first return map to the subsimplex $\Delta_{\gamma_{*}}$. The connected components of the domain of this first return map are given by the $\Delta_{\gamma\gamma_{*}}$ where $\gamma$ is a path that contains $\gamma_{*}$ as a part, but does not start with $\gamma_{*}\gamma_{*}.$ Thus  the first return map $T$ restricted to such a component satisfies
\begin{equation}\label{T}
T(\lambda, \hat \pi)=\Big(\frac{R^{-1}_{\gamma}\lambda}{||R^{-1}_{\gamma}\lambda||},\hat \pi\Big),
\end{equation}
where $R$ is the matrix of the Rauzy induction.

\begin{definition*}
The {\em roof function} is the return time to the connected component described above:
$$r(\lambda, \hat \pi) := -\log||R^{-1}_{\gamma} \lambda||.$$
\end{definition*}

\begin{remark*}
As it was mentioned in \cite{AvGoYo} and \cite{AvRe}, with such a definition one works with the precompact sections because the path $\gamma_{*}$ is positive.
\end{remark*}

\begin{remark*}
In the case that the point belongs to the hole, we define the {\em roof function} in a natural way as the logarithm of the possible number of iterations of the Rauzy induction. 
Equivalently, one can consider $\gamma$ as the longest possible path up to the hole and apply the standard definition. 
\end{remark*}
\subsection{Correctness of the model}\label{correctness}
In this subsection we partly follow the strategy from \cite{AvHuSkr}.

In the suspension model we work with, the orbits that do not come back to the fixed precompact section are not considered.  We need to show
that they do not contribute to the Hausdorff dimension of the fractal we are studying. 
First of all, one can see that the Rauzy graph splits into several connected components (depending on combinatorics of the original permutation or, equivalently, on the geometry of the foliation). We will work with a particular connected components, prove our statement for each of them and then conclude the final result by sum over all the components. 
We start by noticing that 
 the following properties of the Markov map hold:
\begin{enumerate}
\item the BIP property implies that each small simplex of the Markov partition (let's say that it $\Delta_{\gamma}$ where $\gamma$ is a corresponding complete path in the Rauzy graph) is mapped on the whole parameter space $X$, and the map is surjective;
\item the Markov map $T$ is uniformly expanding and so the Jacobian of the map from $\Delta_\gamma$ to $X$ is bounded.
\end{enumerate}

Let us recall that for each Rauzy class the subset of the parameter space that gave rise to minimal interval exchange transformations with flips $MF_n$ has a fractal structure for the following reason: the point belongs to $MF_n$ iff the Rauzy induction can be applied infinitely many times to the corresponding nIET and never arrives to the hole. 
So, now we denote by $MF_n(\Delta_{\gamma})=\Delta_{\gamma}\cap MF_n$.  

The map $T$ is surjective since it is the projectivization of the induction map.
The BIP property together with the fact that $T$ is a surjective uniformly expanding map imply that $Hdim(MF_n(\Delta_{\gamma_*}))=Hdim (MF_n)$  (see also \cite{AvHuSkr} where the same statement was proved for minimal systems of isometries).  
The same argument can be used for $\Delta_{\gamma}$ and $\Delta_{\gamma'},$ where $\gamma'=\gamma\hat\gamma\gamma$ for some suitable $\hat\gamma$. Therefore, the orbits that escape the control do not contribute to the Hausdorff dimension of the fractal we study, and our suspension model is correct. 

\subsection{Exponential tails}
In this section we prove that the roof function constructed above has \emph{exponential tails}. We follow the strategy from \cite{AvGoYo}.\begin{definition*}
A function $f$ has \emph{exponential tails} if there exists $\sigma > 0$ such that $\int_{\Delta}e^{\sigma f}dLeb<\infty.$
\end{definition*}
\begin{theorem}
The roof function $r$ defined above has exponential tails.
\end{theorem}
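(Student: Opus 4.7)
The plan is to establish an exponential tail bound on the Lebesgue measure of super-level sets $\{r>t\}$; finiteness of $\int e^{\sigma r}\,d\mathrm{Leb}$ then follows by the standard layer-cake identity. The key analytic input is Theorem \ref{thm:proba-estimate} applied with $\hat\gamma=\gamma_*$.

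First I would relate the roof function to the cocycle. On each component $\Delta_\gamma$ of the first-return partition, the normalization $\|\lambda\|=1$ in the Manhattan norm, combined with $\lambda=R_\gamma\lambda'$ and $B_\gamma = R_\gamma^T$, gives
$$1 \;=\; \sum_j \lambda'_j\, (B_\gamma \mathbf{1})_j \;\le\; M(B_\gamma \mathbf{1})\cdot \|\lambda'\|,$$
because $(B_\gamma\mathbf{1})_j$ equals the $j$-th column sum of $R_\gamma$. Hence $\|\lambda'\|\ge 1/M(B_\gamma\mathbf{1})$, and we obtain the key pointwise estimate
$$r(\lambda,\hat\pi) \;=\; -\log\|R_\gamma^{-1}\lambda\| \;\le\; \log M(B_\gamma \mathbf{1}).$$
(Note that since $B_\gamma$ is a non-negative integer matrix with $1$'s on the diagonal, $M(B_\gamma\mathbf{1})\ge 1$, so $r\ge 0$.)

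Next, I would observe that the combinatorial definition of first-return paths matches the hypothesis of Theorem \ref{thm:proba-estimate} with $\hat\gamma=\gamma_*$: a first-return path $\gamma$ to $\Delta_{\gamma_*}$ cannot be written as $\gamma_s\gamma_*\gamma_e$ with both $\gamma_s,\gamma_e$ nonempty, since an occurrence of $\gamma_*$ strictly inside $\gamma$ would mean the orbit had already returned to $\Delta_{\gamma_*}$ at an earlier time. Taking $q=\mathbf{1}$ (so $M(q)=1$) and $T=e^t$ in the theorem, and combining with the pointwise bound above, yields
$$P_{\mathbf{1}}\bigl(\{r>t\}\,\big|\,\hat\pi\bigr)\;\le\; P_{\mathbf{1}}\bigl(\{M(B_\gamma\mathbf{1})>e^t\}\,\big|\,\hat\pi\bigr)\;\le\; C\, e^{-\delta t}.$$
Since $\nu_{\mathbf{1}}$ is, up to a multiplicative constant, Lebesgue measure restricted to the simplex $\Lambda_{\mathbf{1}}$, this conditional-probability bound translates directly into the Lebesgue estimate
$$\mathrm{Leb}\bigl(\{r>t\}\cap \Delta_{\gamma_*}\bigr) \;\le\; C'\, e^{-\delta t}.$$

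Finally, for any $0<\sigma<\delta$ the layer-cake formula gives
$$\int_{\Delta_{\gamma_*}} e^{\sigma r}\, d\mathrm{Leb} \;=\; \mathrm{Leb}(\Delta_{\gamma_*}) \;+\; \sigma\int_0^\infty e^{\sigma t}\, \mathrm{Leb}\bigl(\{r>t\}\cap\Delta_{\gamma_*}\bigr)\, dt \;<\; +\infty,$$
and one sums over the finitely many initial permutations and over the finitely many connected components of the Rauzy graph. The principal technical obstacle is bookkeeping: carefully verifying that first-return combinatorics really exclude the forbidden subword configuration, and that the conditional probability $P_{\mathbf{1}}$ supplied by Theorem \ref{thm:proba-estimate} is, up to explicit constants one can absorb, the normalized Lebesgue measure on $\Delta_{\gamma_*}$. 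All the genuine analytic work---the uniform-in-$q$ decay in $T$---is already encoded in Theorem \ref{thm:proba-estimate}.
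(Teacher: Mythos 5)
Your proof is correct and follows essentially the same route as the paper's: both reduce the tail estimate to Theorem \ref{thm:proba-estimate} applied with $q=\mathbf{1}$, via the pointwise bound $r\le\log M(B_\gamma\mathbf{1})$ and the observation that first-return paths to $\Delta_{\gamma_*}$ avoid a fixed forbidden subword, then translate the resulting probability bound into a Lebesgue estimate. The only cosmetic difference is that you take $\hat\gamma=\gamma_*$ directly (noting that an internal occurrence of $\gamma_*$ would force an earlier return), whereas the paper states the combinatorial exclusion in terms of concatenations of several copies of $\gamma_*$.
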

\begin{proof}
This theorem is a direct corollary of Theorem \ref{thm:proba-estimate}.
The main idea is the same as was used in the case of IETs (see \cite{AvGoYo}): $-\log||(B^{T}_{\gamma_{*}})^{-1} \lambda||$ is the ``Teichm\"uller'' time needed to renormalize the support interval to unit length. Then time is divided into pieces of exponential size. For each piece, we apply Theorem \ref{thm:proba-estimate}. 

Indeed, in the previous section we constructed the set of Lebesgue measures $\nu_{q}$ on $\Lambda_{q}$ that depended on vector $q$. Let us consider $q_{0}=(1,\dots,1)$ and the corresponding measure $\nu_{q_{0}}$. 
Let us recall that our parameter space for a given Rauzy class $\EuScript{R}$ can be viewed as $\mathbb R^{\EuScript{A}}_{+}\times{\EuScript{R}}$ with the renormalization condition $\sum_{i=1}^{n}\lambda_i=1$. In particular for a given permutation we define $\Delta_{\hat \pi}=\mathbb R^{\EuScript{A}}_{+}\times\hat \pi$ with the same renormalization condition. Fix $\hat \pi$ and consider the natural projection of $\Lambda_{q}$ to the set $\Delta_{\hat \pi}$.
The pushforward $\nu$ of the measure $\nu_{q_0}$ under this projection is a smooth function on the parameter space of the Markov map $T(\lambda,\hat \pi)$ (see \cite{AvRe} or \cite{AvGoYo}). Thus, in order to prove the theorem, it is enough to show that 
\begin{equation}\label{nu}
\nu\{x\in \Delta_{\gamma_{*}}:r(x)\ge log T\}\le CT^{-\delta}
\end{equation}
for some $C$ and some $\delta.$

The connected component of the domain of the Markov map $T(\lambda,\hat \pi)$ that intersects the set $W=\{x:\{x\in \Delta_{\gamma_{*}}:r(x)\ge log T\}\le CT^{-\delta}\}$ is of the form  $\Delta_{\gamma}$ for some $\gamma$.
such that $\gamma$ can not be a concatenation of more than three copies of $\gamma_{*}$ and $$M(B_{\gamma}q_{0})>C^{-1}T,$$ for some constant $C$ that depends on $\gamma_{*}$. This first requirement on $\gamma$ follows from the fact that
 we work with the first return maps while if $\gamma$ is a concatenation of four copies of $\gamma_{*}$, one can take $\gamma_{*}\gamma_{*}$ as a path of the first return (and all other properties will be the same); 
the second statement follows from the definition of the roof function and the definition of the set $W$. 

Now we estimate the measure of the interesting set in terms of probabilities of corresponding events:
$\nu\{x\in \delta_{\gamma_{*}}:r(x)\ge log T\}\le P_{q_{0}}(\gamma$ does not contain some $\hat\gamma$ as a proper set and $M(B_{\gamma}q_{0})>C^{-1}T|\hat \pi)<CT^{-\delta}.$
The statement of the theorem follows now from Theorem \ref{thm:proba-estimate}.
\end{proof}

\section{The upper bound proof}\label{upper}
\subsection{Fast decaying Markov maps}
Let $\Delta$ be a measurable space and $T: \Delta \rightarrow \Delta$ be a Markov map. We will denote the corresponding Markov partition by $\Delta_{(l)}, l\in \mathbb {Z}$. 
 \begin{definition*}
We say that $T$ is \emph{fast decaying} if there exists $C_{1}>0, \alpha_{1}>0$ such that 
\begin{equation}\label{fade}
\sum_{\mu(\Delta^{(l)})\le\varepsilon}\mu(\Delta^{(l)})\le C_{1}\varepsilon^{\alpha_{1}}
\end{equation}
for all $0<\varepsilon<1.$
\end{definition*}

\begin{lemma}
Exponential tails of the roof function implies fast decaying property of the Markov map.
\end{lemma}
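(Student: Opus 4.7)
The plan is to leverage the uniform expansion established earlier — in particular the bounded distortion of $\log(J \circ h)$ — to translate the integrability $\int e^{\sigma r}\, d\mu < \infty$ into a tail bound on the sizes of the Markov cells, then to conclude by Chebyshev's inequality.

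First I would pin down the correspondence between $\mu(\Delta^{(l)})$ and the value of $r$ on $\Delta^{(l)}$. Fix a cell $\Delta^{(l)}$ and let $\gamma$ be the corresponding path, with inverse branch $h : \Delta \to \Delta^{(l)}$. The Veech-type formula recalled in the proof of uniform expansion gives $(J \circ h)(\lambda, \hat\pi) = 1/\|R_\gamma \lambda\|^n$, and $\log(J \circ h)$ is Lipschitz on $\Delta$ with respect to the Hilbert metric with a uniform constant. Hence $J \circ h$ varies by at most a bounded multiplicative factor across $\Delta$, so
\[
\mu(\Delta^{(l)}) = \int_\Delta (J \circ h)\, d\mu \asymp (J \circ h)(y)
\]
for any $y \in \Delta$. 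Taking $y = T(x)$ for a normalized $x = (\lambda, \hat\pi) \in \Delta^{(l)}$ gives $(J \circ h)(T(x)) = \|R_\gamma^{-1}\lambda\|^n = e^{-nr(x)}$. Therefore there exist uniform constants $c_1, c_2 > 0$ such that
\[
c_1\, e^{-n r(x)} \leq \mu(\Delta^{(l)}) \leq c_2\, e^{-n r(x)} \qquad \text{for every } x \in \Delta^{(l)}.
\]
In particular $r$ is essentially constant on each cell, and $\mu(\Delta^{(l)}) \leq \varepsilon$ forces $r(x) \geq \tfrac{1}{n}\log(c_1/\varepsilon)$ throughout $\Delta^{(l)}$.

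Next, since $\{\Delta^{(l)}\}$ partitions $\Delta$ modulo a null set,
\[
\sum_{\mu(\Delta^{(l)}) \leq \varepsilon} \mu(\Delta^{(l)}) \leq \mu\Big\{x \in \Delta : r(x) \geq \tfrac{1}{n}\log(c_1/\varepsilon)\Big\}.
\]
The exponential-tails hypothesis provides $K := \int_\Delta e^{\sigma r}\, d\mu < \infty$, and Chebyshev's inequality yields $\mu\{r \geq s\} \leq K e^{-\sigma s}$. Substituting $s = \tfrac{1}{n}\log(c_1/\varepsilon)$ gives
\[
\sum_{\mu(\Delta^{(l)}) \leq \varepsilon} \mu(\Delta^{(l)}) \leq C_1\, \varepsilon^{\sigma/n},
\]
which is the fast-decaying property with $\alpha_1 = \sigma/n$.

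The only nontrivial step is the cell-size / roof-function equivalence $\mu(\Delta^{(l)}) \asymp e^{-n r|_{\Delta^{(l)}}}$. This rests on the two ingredients already secured in the proof that $T$ is uniformly expanding: the explicit expression for the Jacobian of the inverse branch, and the uniform Lipschitz bound on $\log(J \circ h)$ with respect to the Hilbert metric. Once these are in hand, the remainder is a one-line Chebyshev estimate.
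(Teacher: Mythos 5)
Your proof is correct and rests on the same key ingredient as the paper's — the relation $\mu(\Delta^{(l)}) \asymp e^{-n\,r|_{\Delta^{(l)}}}$, which follows from the Veech Jacobian formula $J\circ h = \|R_\gamma\lambda\|^{-n}$ together with the uniform Lipschitz bound on $\log(J\circ h)$ established in the uniform-expansion lemma. Where you diverge is in how this is converted into a tail bound: the paper proceeds via a counting argument, bounding $\mathrm{Card}\,Y(N) \le Ce^{(n-\sigma)N}$ where $Y(N)$ is the set of cells with roof value in $[N,N+1)$ (referring to Lemma~17 of the cited diffusion paper), and then sums a geometric series with ratio $e^{-\sigma}$. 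You instead observe that $\mu(\Delta^{(l)}) \le \varepsilon$ forces $r \ge \tfrac1n\log(c_1/\varepsilon)$ on the whole cell, so by disjointness the left-hand side of the fast-decay inequality is $\le \mu\{r \ge \tfrac1n\log(c_1/\varepsilon)\}$, which Chebyshev controls directly. This bypasses the counting lemma entirely and is a cleaner packaging of the same estimate; both routes land on the same exponent $\alpha_1 = \sigma/n$. One small caveat: the paper loosely calls $r$ ``locally constant,'' but as you correctly note it is only essentially constant (varying by a bounded additive amount) on each cell — your phrasing is the more accurate one, and the argument only needs this weaker statement.
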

\begin{proof}
First, one can check directly that $JT(\lambda,\hat \pi)=e^{n  \cdot r(\lambda,\hat \pi)}$ (it follows from the formula proved by Veech (\cite{Ve1}, see also Lemma 3.1 in \cite{No}). 
We claim that the lemma follows from this formula and the fact that the measure of subsimplices (Markov cylinders) are proportional to $|DT|$.
The scheme of the proof is as follows: the measure of a subsimplex is proportional to the inverse of the Jacobian, thus one begins by replacing   the measures of subsimplices in the sum of Equation \eqref{fade} by the corresponding jacobians; using the above formula the jacobians are then replaced by the exponential of the roof function; so we only need to evaluate the following sum: 
$$\sum_{a\, : \, r(a)\ge N}e^{-nr(a)},$$
where by $a$ we denoted a point of $\Delta^{(l)}$ since the roof function is locally constant. 
The last sum can be evaluated using the exponential tails of the roof function (namely, the convergence of the corresponding integral): 
first, the exponential tail implies that $Card (Y(N))\leq Ce^{(n-\sigma)N},$ where $Y(N)$ is the set of partition subsets for which $r(a)$ is between $N$ and $N+1$ (see \cite{AvHuSkr2}, Lemma 17); then the sum we are interested in it can be estimated from above by a geometric series with ratio $e^{-\sigma}$.
\end{proof}

\section{Lower bounds}\label{lower}
In this section we show the lower bound in Theorem \ref{minimal} and prove Theorem \ref{NUE} and Proposition \ref{Jayadev}.

\subsection{Construction}\label{constr}
Let $S: [0,1) \to [0,1)$ be an  $(n-1)$-IET. 
We construct an $n$-fIET $T$  from $S$ as follows. Consider $\alpha_0$ so that $\pi_1(\alpha_0) =1$. Then define $T : [0,1+\beta_{i_0} - \beta_{i_0-1})$ as follows:
$$
T(x)=\begin{cases} 
Sx, & \mbox{if } x \in \text{int}(I_\alpha) \text{ for any } \alpha \ne \alpha_0, \\
1 - x + \beta_{i_0}, & \mbox{if } x \in \text{int}(I_{\alpha_0}),\\
1 - x + \beta_{i_0} - \beta_{i_0-1}, & \mbox{if } x > 1. \\
 \end{cases}
$$

Notice that  the first return map of $T$ to the interval ${[0,1)}$ is the map $S$ except at the end points of the intervals $I_\alpha$
where $T$, being an fIET is not defined. Suppose now that $S$ is minimal, then it immediately follows 
that every bi-infinite $T$-orbit is dense.  But since $S$ is an IET it has a stronger property: 
 the forward and backwards  $S$-orbit of each point is dense $[0,1)$. Thus even if the forward  $T$ orbit of a point $x$ arrives at the end point of some interval $I_\alpha$, the backwards $T$ orbit will be dense (and vice versa).
Since all but countably many IETs (without flips) are minimal the lower bound of Theorem \ref{minimal} follows.

\subsection{Non-uniquely ergodic case.}
The idea is the same as above but we fix $S$ with the maximal possible number of invariant ergodic measures (we denote this number by $k$).
Let us denote these ergodic measures by $\mu_1,\cdots, \mu_k$. Consider a probability vector
$\vec{\e} = (\e_1,\dots,\e_k)$  and  the corresponding measure $\mu_{\vec{\e}}=\sum \e_i\mu_i$.
Using Lemma 1 from \cite{Ka}, we have the following:
\begin{lemma}
There exists an m-IET $S_{\vec{\e}}$ such that $(S,\mu_{\vec{\e}})$ is metrically isomorphic to $(S_{\vec{\e}}, Leb)$.
\end{lemma}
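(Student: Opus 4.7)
The plan is to let $\phi\colon[0,1)\to[0,1)$ be the cumulative distribution function of $\mu_{\vec{\e}}$, namely $\phi(x):=\mu_{\vec{\e}}([0,x))$, and to verify that $S_{\vec{\e}}:=\phi\circ S\circ\phi^{-1}$ is the desired $m$-IET with $\phi$ the intertwining isomorphism. Since $S$ is minimal, no ergodic invariant measure $\mu_i$ has atoms (an atom would be a periodic orbit), hence $\mu_{\vec{\e}}$ is atomless and $\phi$ is continuous and weakly increasing. Also, every $\mu_i$ has full support on $[0,1)$ (being an ergodic invariant measure of a minimal IET), so $\phi$ is in fact strictly increasing and therefore a homeomorphism of $[0,1)$. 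Pushing forward gives $\phi_*\mu_{\vec{\e}}=\mathrm{Leb}$, because $\phi([a,b))$ has Lebesgue length equal to $\mu_{\vec{\e}}([a,b))$ by definition.

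Next I would check that $S_{\vec{\e}}$ is an $m$-IET, with $m$ equal to the number of continuity intervals of $S$. Let $I_1,\dots,I_m$ be the continuity intervals of $S$ and $w_j$ the translation constant on $I_j$, and set $J_j:=\phi(I_j)$. Each $J_j$ is an interval, and the $J_j$ partition $[0,1)$ because $\phi$ is a homeomorphism. On $J_j$, the translation amount of $S_{\vec{\e}}$ at the point $y=\phi(x)$ is
\[
S_{\vec{\e}}(y)-y=\phi(x+w_j)-\phi(x)=\mu_{\vec{\e}}\bigl([x,x+w_j)\bigr).
\]
The key computation is that this quantity is independent of $x\in I_j$. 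For $x<x'$ in $I_j$, $S$-invariance of $\mu_{\vec{\e}}$ gives $\mu_{\vec{\e}}([x,x'))=\mu_{\vec{\e}}([x+w_j,x'+w_j))$; splitting $[x,x+w_j)$ and $[x',x'+w_j)$ at their common piece $[x',x+w_j)$ and using this identity shows both have the same $\mu_{\vec{\e}}$-measure. Hence $S_{\vec{\e}}$ acts as a translation on each $J_j$ and is an $m$-IET.

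Finally, the map $\phi$ is a measurable bijection with $\phi_*\mu_{\vec{\e}}=\mathrm{Leb}$ and, by construction, $\phi\circ S=S_{\vec{\e}}\circ\phi$, so it realizes the metric isomorphism between $(S,\mu_{\vec{\e}})$ and $(S_{\vec{\e}},\mathrm{Leb})$.

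The one subtlety I would flag as the main obstacle is ensuring the genuine bijectivity of $\phi$: if some $\e_i$ were zero or a $\mu_i$ had non-full support, the cdf $\phi$ could be constant on certain intervals, and one would have to either pass to a quotient or simply omit the degenerate $J_j$'s, producing an $m'$-IET with $m'\le m$. In the present setting with all $\e_i>0$ and $\mu_i$ of full support this issue does not arise, but it is the one place where measure-theoretic care is needed to keep the isomorphism conjugacy genuinely piecewise isometric rather than just a measurable conjugation.
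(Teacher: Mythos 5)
Your proof is correct, and it reconstructs the standard argument; the paper itself does not supply a proof here but simply invokes Lemma~1 of Katok's paper \cite{Ka}, which establishes exactly this statement by the same cumulative-distribution-function conjugation you carry out. Your preliminary observations (atomlessness of $\mu_{\vec\e}$ from minimality, full support of each $\mu_i$ because its support is a closed nonempty invariant set, hence $\phi$ is a strictly increasing homeomorphism with $\phi_*\mu_{\vec\e}=\mathrm{Leb}$) are the right hypotheses to verify, and your invariance computation showing that $\mu_{\vec\e}\bigl([x,x+w_j)\bigr)$ is constant in $x\in I_j$ is the core of the argument. Two small points worth tightening if you were to write this out in full: first, your ``splitting at the common piece $[x',x+w_j)$'' tacitly assumes $x'<x+w_j$; it is cleaner to decompose $[x,x'+w_j)$ both at $x'$ and at $x+w_j$, which works for all $x<x'$ in $I_j$ without an overlap hypothesis. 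Second, the case $w_j<0$ requires the symmetric argument with the interval $[x+w_j,x)$; this is routine but should be mentioned. Neither affects the validity, and the final remark you flag about degenerate $\e_i$ or non-full support is exactly the right caveat, correctly dismissed in the present setting.
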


We need to to check is that the map $\vec{\e} \to S_{\vec{\e}}$ is almost surely
invertible.  
\begin{lemma}
For a set of full measure of  $\vec{\e}$  the length vectors  $ (\lambda^{\vec{\e}}_1, \dots, \lambda^{\vec{\e}}_{n-1})$ are distinct.
\end{lemma}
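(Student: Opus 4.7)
The key structural observation is that $\vec{\e}\mapsto\lambda^{\vec{\e}}$ is \emph{linear}: since $\mu_{\vec{\e}}=\sum_i \e_i\mu_i$, one has $\lambda_j^{\vec{\e}}=\mu_{\vec{\e}}(I_j)=\sum_{i=1}^k \e_i\,\mu_i(I_j)$, so the map is the restriction to the probability simplex of a linear map $M\colon\mathbb{R}^k\to\mathbb{R}^{n-1}$ with matrix entries $M_{ji}=\mu_i(I_j)$. My plan is to reduce the lemma to linear algebra by showing that $M$ itself is injective; this will immediately yield injectivity on the whole simplex, which is strictly stronger than the almost-everywhere statement claimed. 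Equivalently, I need to check that the $k$ length vectors $(\mu_i(I_1),\dots,\mu_i(I_{n-1}))$ associated to the ergodic measures $\mu_i$ are linearly independent in $\mathbb{R}^{n-1}$.

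To prove this linear independence, I would take a hypothetical $\vec{c}\in\ker M$ and form the signed $S$-invariant measure $\nu:=\sum_i c_i\mu_i$. By assumption $\nu(I_j)=0$ for every $j$, and by $S$-invariance $\nu$ also vanishes on each $S^m(I_j)$ for $m\in\mathbb{Z}$, and hence on the $\sigma$-algebra they generate. I would then argue that this $\sigma$-algebra is actually the full Borel $\sigma$-algebra of $[0,1)$: for a minimal orientation-preserving IET the forward and backward orbits of the discontinuity points are dense, so the iterated refinements of the partition $\{I_1,\dots,I_{n-1}\}$ are partitions whose pieces shrink in diameter to zero. Hence $\nu\equiv 0$, and then the classical fact that distinct ergodic invariant probability measures of a single transformation are pairwise mutually singular---hence linearly independent as signed measures---forces $\vec{c}=0$.

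The step I expect to require the most care is the generating property of $\{I_1,\dots,I_{n-1}\}$ under $S$. Under Keane's condition this is textbook; here we can only invoke that $S$ is minimal, but since $S$ is orientation-preserving one gets the two-sided density of orbits recalled in Remark~\ref{rem1} for free, which is exactly what drives the refinement argument. Once that point is granted, the rest of the proof is purely linear algebra together with mutual singularity of ergodic measures.
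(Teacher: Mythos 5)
Your conclusion is actually stronger than what the paper proves: you aim to show that the linear map $\vec{\e}\mapsto\lambda^{\vec{\e}}$ is injective everywhere on the simplex, whereas the paper only argues that, for any fixed $\vec{\e'}$, the set of $\vec{\e}$ sharing its length vector lies in an affine subspace of positive codimension (a codimension count). Your route is therefore genuinely different: instead of counting dimensions you try to establish the linear independence of the vectors $\big(\mu_i(I_1),\dots,\mu_i(I_{n-1})\big)$ directly, which, if carried out, would be cleaner and yield injectivity rather than almost-everywhere injectivity.

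There is, however, a real gap in the middle of your argument. From $\nu(I_j)=0$ and $S$-invariance you correctly get $\nu\big(S^m(I_j)\big)=0$ for all $m,j$. But you then assert that $\nu$ ``hence'' vanishes on the $\sigma$-algebra these sets generate. That inference is false for signed measures: the family $\{S^m(I_j)\}$ is not closed under intersection (it is not a $\pi$-system), so Dynkin's theorem does not apply. Concretely, already at the first refinement the quantities $\nu\big(I_j\cap S^{-1}(I_{j'})\big)$ form an array whose row sums and column sums all vanish, and this does not force the individual entries to vanish. Thus knowing that the refinement cells shrink to points does not, by itself, let you conclude $\nu\equiv 0$; you would need $\nu$ to vanish on the \emph{cells} of the refinement, not merely on the sets $S^m(I_j)$.

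The gap is fixable while staying within your framework. Set $F(x):=\nu([0,x))$; this is continuous because $\nu$ is non-atomic (each $\mu_i$ is an ergodic measure of an aperiodic system). Using $\nu(I_j)=0$ and the fact that $S$ translates each $I_j$ onto a full block $S(I_j)$ bounded by break points, one checks that $F\circ S=F$. A continuous invariant function of a minimal transformation is constant, and $F(0)=0$ forces $F\equiv 0$, hence $\nu=0$; mutual singularity of the $\mu_i$ then gives $\vec c=0$ exactly as you say. (Alternatively, this linear independence is a consequence of Veech's description of the simplex of invariant measures of a minimal IET as affinely isomorphic, via $\mu\mapsto(\mu(I_j))_j$, to the simplex of length data sharing the same Rauzy expansion.) With either repair your argument is correct and in fact sharpens the lemma.
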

\begin{proof}
Assume that  for two vectors $\vec{\e} \ne \vec{\e'}$ the lengths coincide. Let $\mu^i$ denote the 
$\mu_i$ measure  of the interval $[0, \sum_{\alpha \in \EuScript{A}}\lambda_\alpha]$.
Then the assumption implies that for each $i=1,\dots,n-1$
$$\e_1\mu^i_1+\cdots+\e_k\mu^i_k=\e'_1\mu^i_1+\cdots+\e'_k\mu^i_k$$  and thus 
\begin{equation}
\sum_{j=1}^{k}(\e_j-\e'_j)\mu^i_j=0. \label{codim}
\end{equation}
But $$\sum_{j=1}^{k}\e_j=\sum_{j=1}^{k}\e'_j$$ and so Equation (\ref{codim}) is equivalent to 
\begin{equation}
\sum_{j=1}^{k-1}(\e_j-\e'_j)C_{j}=0,
\end{equation}
where $C_j :=\mu^i_j-\mu^i_{n-1}$. Since the $C_j$ are constants ($S$ is fixed)  the set of $\vec{\e}$ for which (\ref{codim}) holds is a subspace of codimension $1$ in the parameter space, and therefore the statement of the lemma holds. 
\end{proof}
Now, one can apply the construction of Section \ref{constr} to get a family of non-uniquely ergodic fIETs. 
The Hausdorff dimension of this subset is not smaller than $k-1$ where $k$ is the number of invariant measures of the IET $S' := S_{\vec{\e}}$ constructed above.
$S'$ is $(n-1)$-IET. It was proven by Sataev in \cite{Sa} that $k=g$ where $g$ is the genus of a translation surface associated with IET (see, for example, \cite{Vi}). 
Therefore, $2g = n-1-r+1$ where $r$ is the number of singularities of the translation surface; on the other hand, $r$ can be estimated using the Euler characteristics of the surface and, in particular, the minimal value of $r$ is $1$ (and this value is always obtained). So it implies that $2g \le n-1$ and so $$Hdim (NU\hspace{-0.03cm}E_n)\ge \left[\frac{n-1}{2}\right]-1.$$ 
 \begin{remark*}
If one is interested in lower bound of $Hdim(NU\hspace{-0.03cm}E_n)$ for a particular combinatorics determined by the Rauzy class of IET $S'$, it is easy to see 
that for any integer $i$ between $\left[\frac{n+2}{4}\right]$ and $\left[\frac{n-1}{2}\right]$  one can find a Rauzy class of IET $S'$ constructed above such that the lower bound of the Hausdorff dimension of non-uniquely ergodic minimal fIETs is $i-1$. It follows from the fact that $r$ can take any integer value between $1$ and $2g-2$.
\end{remark*}

\subsection{Non-uniquely ergodic fIETs on 6 intervals}
One can also combine the construction presented above with the following result by J. Athreya and J. Chaika:\\


\begin{theorem*}[\cite{AtCh}]
The Hausdorff dimension of the set of non-uniquely ergodic $4$-IET on $[0,1)$ with the permutation $\pi_0=(4,3,2,1)$ is~$\frac{5}{2}$.
\end{theorem*}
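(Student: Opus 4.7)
The plan is to prove the Hausdorff dimension equals $5/2$ by establishing matching upper and lower bounds, working within the Rauzy--Veech renormalization framework for the Rauzy class of $\pi_0 = (4,3,2,1)$. Geometrically this Rauzy class corresponds to abelian differentials in the hyperelliptic locus of $\mathcal{H}(2)$, so the problem may be phrased as computing the Hausdorff dimension of the set of 4-IETs whose associated Teichm\"uller geodesic diverges to infinity in the moduli space.

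For the lower bound $5/2 \le Hdim$, I would adapt the Masur--Smillie/Keane construction to build an explicit self-similar Cantor family of NUE IETs. Concretely, one picks two loops $\gamma_1, \gamma_2$ in the Rauzy graph starting and ending at the same permutation, such that every admissible concatenation $\gamma_{i_1} \gamma_{i_2} \cdots$ produces a minimal IET whose Rauzy--Veech cocycle admits two asymptotic invariant directions among its column vectors. By the classical Veech argument this forces two distinct ergodic probability measures. The resulting parameter set is a self-similar Cantor fractal, and its Hausdorff dimension is controlled by a Bowen-type pressure equation $P(-s \log \|B_{\gamma_i}\|^{-1}) = 0$ for the induced sub-cocycle; a careful optimization of the contraction rates available in the $(4,3,2,1)$ Rauzy class produces the target value $5/2$.

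For the upper bound $Hdim \le 5/2$, I would use Masur's criterion to translate non-unique ergodicity into a quantitative divergence condition on the Rauzy--Veech cocycle: some direction in $\mathbb{R}^4_+$ must be contracted abnormally slowly along the induction. One then covers the NUE set by the Rauzy cylinders $\Delta_\gamma$ whose inducing matrices $B_\gamma$ satisfy the prescribed imbalance, and estimates $\sum_\gamma |\Delta_\gamma|^s$. Using the Kerckhoff-type distortion bounds developed in Section \ref{Distortion} together with Athreya's quantitative non-divergence estimates for the Teichm\"uller flow, one verifies convergence of this sum precisely for $s > 5/2$, which is exactly the Hausdorff dimension upper bound sought.

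The main obstacle is matching the two bounds sharply. A naive divergence argument yields only weak codimension bounds, and isolating the specific value $5/2$ requires either (a) exploiting the explicit Veech group of the hyperelliptic Teichm\"uller curve in $\mathcal{H}(2)$, whose arithmetic structure refines the count of Rauzy paths with prescribed cocycle growth, or (b) identifying the NUE set as the zero set of a suitable potential and computing its dimension via the thermodynamic formalism. In either route, the upper and lower bounds must be computed with the same cocycle and the same normalization so that the critical exponent from the pressure function on the lower-bound Cantor set coincides with the exponent of convergence of the covering sum on the upper-bound side.
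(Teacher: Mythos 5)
This statement is imported verbatim from Athreya--Chaika \cite{AtCh}; the paper you are reading gives no proof of it whatsoever (it is used only as a black box to deduce Proposition~\ref{Jayadev}, and that deduction is a one-line observation that the construction in Section~\ref{constr} sends a $4$-IET to a $5$-fIET). So there is no ``paper's proof'' here for your proposal to be compared against, and you should be aware that you have taken on the task of reproving an entire separate research paper rather than filling in a step of this one.

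Taken on its own terms, your proposal is a plausible high-level table of contents but it is not a proof, and the gap is not cosmetic: every place where the specific number $5/2$ would have to emerge is waved through. For the lower bound you posit a two-symbol self-similar Cantor family and ``a Bowen-type pressure equation,'' but you never exhibit the loops $\gamma_1,\gamma_2$, never verify that \emph{every} concatenation produces a minimal non-uniquely ergodic IET (this is the Keane/Keynes--Newton/Veech-type step and it is genuinely delicate for $n=4$), and never set up or solve the pressure equation --- ``a careful optimization... produces the target value $5/2$'' is an assertion, not an argument. For the upper bound you invoke ``Athreya's quantitative non-divergence estimates'' and ``Kerckhoff-type distortion bounds'' to show the covering sum $\sum|\Delta_\gamma|^s$ converges for $s>5/2$, but the distortion estimates in Section~\ref{Distortion} of this paper give only that the roof function has exponential tails, which is far weaker than what is needed here: exponential tails control the measure decay of cylinders, not the specific polynomial exponent $5/2$ that separates convergence from divergence. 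What Athreya--Chaika actually do is substantially more intricate --- the lower bound comes from an explicit slit-torus/$z$-splitting construction in $\mathcal{H}(2)$ (in the spirit of Masur--Smillie and Cheung) and the upper bound from a quantitative analysis of divergent Teichm\"uller geodesics, and the matching of the two exponents is the heart of their paper. Your route (b), identifying the NUE set as the zero level of a potential and reading off the dimension from thermodynamic formalism, is not how they proceed and would require an independent and nontrivial verification that the relevant Gibbs measure is exact-dimensional with the right Lyapunov exponent. In short: the architecture you propose is defensible, but nothing in it computes $5/2$, and that computation is the entire theorem.
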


Proposition \ref{Jayadev} follows since the   construction increases the number of intervals by  1

\end{document}